\DeclareFontFamily{U}{mathx}{\hyphenchar\font45}
\DeclareFontShape{U}{mathx}{m}{n}{
      <5> <6> <7> <8> <9> <10>
      <10.95> <12> <14.4> <17.28> <20.74> <24.88>
      mathx10
      }{}
\DeclareSymbolFont{mathx}{U}{mathx}{m}{n}
\DeclareMathAccent{\widecheck}{0}{mathx}{"71}
\DeclareMathOperator*{\esssup}{ess\,sup}
\newcommand{\dd}{\mathrm{d}}
\newcommand{\Uin}{U_{\mathrm{in}}}
\newcommand{\uin}{u_{\mathrm{in}}}
\newcommand{\vin}{v_{\mathrm{in}}}
\newcommand{\T}{\mathbf{T}}
\newcommand{\ffi}{\varphi}
\newcommand{\Ll}{\textnormal{L}_{\textnormal{loc}}}
\def\Z{\mathbf{Z}}
\def\N{\mathbf{N}}
\def\R{\mathbf{R}}
\def\C{\textnormal{C}}
\def\L{\textnormal{L}}
\def\D{\textnormal{D}}
\def\B{\textnormal{B}}
\def\W{\textnormal{W}}
\def\E{\textnormal{E}}
\def\H{\textnormal{H}}
\def\V{\textnormal{V}}
\def\pa{\partial}
\newtheorem{thm}{Theorem}
\newtheorem{cor}{Corollary}
\newtheorem{lem}{Lemma}
\newtheorem{prop}{Proposition}
\newtheorem{definition}{Definition}
\newtheorem{remark}{Remark}
\title{Perturbative global solutions of a large class of cross diffusion systems in any dimension} 
\begin{document}
\selectlanguage{english}

\maketitle
 
\centerline{\scshape L. Desvillettes}
\medskip
{\footnotesize
  \centerline{Universit\'e Paris Cit\'e and Sorbonne Université, CNRS, IUF, IMJ-PRG}
  \centerline{F-75013 Paris, France}
\centerline{E-mail: desvillettes@imj-prg.fr}}
\bigskip

\centerline{\scshape A. Moussa}
\medskip
{\footnotesize
  \centerline{Sorbonne Université, CNRS, Université Paris Cité, Laboratoire Jacques-Louis Lions (LJLL)}
  \centerline{Département de Mathématiques et Applications (DMA), École Normale Supérieure (ENS-PSL)}
\centerline{F-75005, Paris, France}
\centerline{E-mail : ayman.moussa@sorbonne-universite.fr}}
\bigskip

\begin{abstract}
This article focuses on a large  family of cross-diffusion systems of the form $\pa_t U - \Delta A(U) = 0$, in dimension $d \in \N^*$, and where $U \in \R^2$. We show that under natural conditions on the nonlinearity $A$, those systems have a unique smooth (nonnegative for all components) solution when the initial data are small enough in a suitable norm.
\end{abstract}

\section{Introduction}

Cross diffusion systems constitute a large class of systems of PDEs which naturally appear in physics as well as population dynamics (see for instance \cite[Chapter 4]{book_ansgar} for several examples). The term cross diffusion is used when in a system of parabolic PDEs, at least one  of the diffusion rates in the equation for one component depends on the value of one or more of the other components.
\medskip

We focus here on the case when the system can be written under the form 
\begin{equation}
  \label{eq:syst_gen}
  \left\{
    \begin{lgathered}
\partial_t u - \Delta\big[(d_1 + p(u,v))u\big] = 0,\\
\partial_t v - \Delta\big[(d_2 + q(u,v))v\big] = 0,
    \end{lgathered}
  \right.
\end{equation}
where $d_1,d_2>0$ are two constants, and  $p,q$ are  two elements of $\R_+[X,Y]$ (polynomials of two variables with real nonnegative coefficients) vanishing at $(0,0)$, and we consider periodic boundary conditions, that is the system is considered on the torus $\T^d$. It can be checked, at least formally, that a solution of these systems remain nonnegative for all $t\geq 0$ whenever it is initially nonnegative. In the sequel of this paper, we systematically denote $U := (u,v)$.

\medskip

Note that the classical Shigesada-Kawasaki-Teramoto (called SKT in the rest of the paper) system (see \cite{skt}), where $p$ and $q$ are linear w.r.t $u,v$, enters in this category (if the zero-th order  competition terms are dropped). Many natural extensions of this system where $p,q$ are not linear any more also enter this category.  
\medskip

Before stating our main theorems, let us fix a (rather weak) setting for which each equation of \eqref{eq:syst_gen} is well-posed. For this purpose, we rely on the following result.

\begin{lem}\label{lem:dua:bis}
  Consider $\mu\in\Ll^\infty(\R_+;\L^\infty(\T^d))$ such that $\inf \mu>0$, $z_{\textnormal{in}}\in\H^{-1}(\T^d)$,
  and $f\in\Ll^2(\R_+;\L^2(\T^d))$. Then, there exists a unique $z\in\Ll^2(\R_+;\L^2(\T^d))$ such that, after extending all functions  by $0$ on $\R_-\times\T^d$, the following holds ($\delta_0$ is the Dirac mass at point $0$ on $\R$)
\begin{align*}
    \partial_t z -\Delta(\mu z) = \Delta f +  \delta_0\otimes z_{\textnormal{in}}, \text{ in }\mathscr{D}'(\R\times\T^d).
\end{align*}
Furthermore, this solution $z$ belongs to $\mathscr{C}([0,T];\H^{-1}(\T^d))$ and satisfies the following estimate for $t\in[0,T]$:
    \begin{multline}\label{ineq:dua:bisbis}
\|z(t, \cdot)\|_{\H^{-1}(\T^d)}^2 +\int_0^t \int_{\T^d} \mu\, z^2 \\\leq \|z_{in}\|_{\H^{-1}(\T^d)}^2 + \left(\int_{\T^d}z_{in}\right)^2 \int_0^t\int_{\T^d} \mu + \int_0^t \int_{\T^d} \frac{f^2}{\mu}.
    \end{multline}
  \end{lem}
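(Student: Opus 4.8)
The natural energy space for a scalar parabolic equation of the form $\partial_t z-\Delta(\mu z)=\text{source}$ is $\H^{-1}(\T^d)$ — testing against $z$ itself in $\H^1(\T^d)$ would produce an uncontrolled term $\int z\,\nnabla\mu\cdot\nnabla z$ — so the plan is to run the energy/duality method in $\H^{-1}(\T^d)$, i.e.\ to ``test against $(-\Delta)^{-1}$ of the solution''. \emph{Step 1 (the energy identity).} Integrating the equation in $x$ shows that $t\mapsto\int_{\T^d}z$ is constant on $(0,\infty)$, equal to $m:=\int_{\T^d}z_{\textnormal{in}}$ (normalize $|\T^d|=1$). Set $\zeta:=z-m$, which is mean-zero and solves $\partial_t\zeta=\Delta(\mu z+f)$ on $(0,\infty)\times\T^d$ with initial trace $z_{\textnormal{in}}-m$; since $\mu z+f\in\Ll^2(\R_+;\L^2(\T^d))$ and $\Delta$ maps $\L^2(\T^d)$ into $\dot{\H}^{-2}(\T^d)$, one has $\partial_t\zeta\in\Ll^2(\R_+;\dot{\H}^{-2}(\T^d))$. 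In the Gelfand triple $\L^2_0(\T^d)\hookrightarrow\dot{\H}^{-1}(\T^d)\hookrightarrow\dot{\H}^{-2}(\T^d)$ ($\L^2_0$ being the mean-zero subspace, with $\dot{\H}^{-1}$ as pivot, whose dual relative to $\L^2_0$ is exactly $\dot{\H}^{-2}$), the Lions--Magenes lemma gives $\zeta\in\mathscr{C}([0,T];\dot{\H}^{-1}(\T^d))$ together with, for a.e.\ $t$,
\[
\tfrac12\tfrac{\dd}{\dd t}\|\zeta\|_{\dot{\H}^{-1}(\T^d)}^2=\langle\partial_t\zeta,\zeta\rangle=-\int_{\T^d}(\mu z+f)\,\zeta,
\]
the last equality because $(-\Delta)^{-1}\Delta=-\Id$ on mean-zero functions and $\int_{\T^d}\zeta=0$ kills the (possibly nonzero) mean of $\mu z+f$.

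\emph{Step 2 (from the identity to \eqref{ineq:dua:bisbis}).} Expand $-\int(\mu z+f)\zeta=-\int\mu z^2+m\int\mu z-\int f\zeta$, multiply the identity by $2$, and add $\int_{\T^d}\mu z^2$ to both sides; using $-\int\mu z^2+2m\int\mu z=-\int\mu\zeta^2+m^2\int\mu$ one gets
\[
\tfrac{\dd}{\dd t}\|\zeta\|_{\dot{\H}^{-1}(\T^d)}^2+\int_{\T^d}\mu z^2=-\int_{\T^d}\mu\zeta^2-2\int_{\T^d}f\zeta+m^2\int_{\T^d}\mu\le\int_{\T^d}\frac{f^2}{\mu}+m^2\int_{\T^d}\mu,
\]
the inequality being the pointwise $-\mu\zeta^2-2f\zeta\le f^2/\mu$ (complete the square). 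Integrating on $[0,t]$ and using $\|w\|_{\H^{-1}(\T^d)}^2=\|w-\int_{\T^d}w\|_{\dot{\H}^{-1}(\T^d)}^2+(\int_{\T^d}w)^2$ — so the $m^2$ terms cancel between the two sides — yields precisely \eqref{ineq:dua:bisbis}, and $z=\zeta+m\in\mathscr{C}([0,T];\H^{-1}(\T^d))$.

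\emph{Step 3 (existence and uniqueness).} For existence I would use a Galerkin scheme in the $\L^2$-orthonormal eigenbasis of $-\Delta$ on $\T^d$: this basis diagonalizes $(-\Delta)^{-1}$, so the computation of Steps 1--2 is legitimate already at the finite-dimensional level and bounds the approximations $z_N$ in $\Ll^\infty(\R_+;\H^{-1}(\T^d))\cap\Ll^2(\R_+;\L^2(\T^d))$ uniformly in $N$; extracting a weak limit $z_N\rightharpoonup z$ in $\Ll^2(\R_+;\L^2(\T^d))$, one passes to the limit in the weak Galerkin formulation — $\mu\in\L^\infty$ enters only as a multiplier acting on $z_N$, tested against the fixed $\L^2$ function $\Delta\varphi$ — and recovers the stated distributional equation with initial trace $z_{\textnormal{in}}$. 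For uniqueness, the difference $z$ of two solutions solves the equation with $z_{\textnormal{in}}=0$ and $f=0$, hence has zero mean for all $t$, and Step~1 applies verbatim to give $\|z(t)\|_{\H^{-1}(\T^d)}^2+\int_0^t\int_{\T^d}\mu z^2\le0$, so $z\equiv0$.

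\emph{Main obstacle.} The only genuinely delicate point is the rigor of Step~1, namely legitimately differentiating $t\mapsto\|\zeta(t)\|_{\dot{\H}^{-1}(\T^d)}^2$ and evaluating $\langle\partial_t\zeta,\zeta\rangle$ as $-\int_{\T^d}(\mu z+f)\zeta$. This forces one to (i) use the correct Gelfand triple, with $\dot{\H}^{-1}(\T^d)$ and \emph{not} $\L^2(\T^d)$ as pivot space, so that $\Delta$ lands in the dual of $\L^2_0(\T^d)$, and (ii) keep track of the zero Fourier mode throughout, since $-\Delta$ is not invertible on $\T^d$; the rest is a routine completion of the square together with a standard Galerkin passage to the limit.
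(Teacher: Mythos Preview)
The paper does not actually prove this lemma: its proof consists of two citations, one for the estimate and one for the well-posedness. Your argument is a correct self-contained proof and is, in spirit, exactly the duality method underlying those references: conservation of the zero Fourier mode, the energy identity in $\dot{\H}^{-1}(\T^d)$ obtained by ``testing against $(-\Delta)^{-1}\zeta$'' (made rigorous through the Lions--Magenes lemma in the Gelfand triple $\L^2_0\hookrightarrow\dot{\H}^{-1}\hookrightarrow\dot{\H}^{-2}$, with the non-standard pivot you correctly identify), completion of the square $-\mu\zeta^2-2f\zeta\le f^2/\mu$, and a Galerkin approximation for existence. Your handling of the mean --- splitting $z=\zeta+m$ and tracking the $m^2\int\mu$ term --- is exactly what produces the middle term on the right of \eqref{ineq:dua:bisbis}.

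One minor point of convention worth flagging: the inequality \eqref{ineq:dua:bisbis} holds with constant~$1$ only for the specific norm $\|w\|_{\H^{-1}(\T^d)}^2:=\|w-\int_{\T^d}w\|_{\dot{\H}^{-1}(\T^d)}^2+(\int_{\T^d}w)^2$ that you use in Step~2; for the usual Bessel-potential norm $\sum_k(1+|k|^2)^{-1}|\widehat{w}_k|^2$ one would pick up harmless multiplicative constants. This does not affect any application in the paper.
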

  \begin{proof}
See \cite[Lemma 1]{bmmh} (for the estimate) and \cite[Theorem 3]{m} (for the well-posedness).
   \end{proof}
   It may be readily checked that for nonnegative bounded densities $u,v$, each equation of \eqref{eq:syst_gen} is a particular case of the Kolmogorov equation considered in Lemma~\ref{lem:dua:bis}, in the case of a vanishing source term $f=0$. Estimate \eqref{ineq:dua:bisbis} is a generalization of the celebrated \emph{duality estimate} (see \cite{pisc} for the seminal version and \cite{perthame_laamri} for a similar one). The case $f\neq 0$ will be used crucially in our stability result Theorem~\ref{uniqueness} below, generalizing the analysis done in \cite{bmmh} for the classical SKT system.

   \begin{definition}\label{def:dua}
In the well-posedness setting of Lemma~\ref{lem:dua:bis}, we speak of \textsf{dual solution}. We  use the same naming for solutions $(u,v)$ of \eqref{eq:syst_gen} that match this setting for each equation. 
   \end{definition}

   \medskip
   
   We now write down the main results of this paper (Theorems \ref{existence} and \ref{uniqueness}), respectively related to existence and stability/uniqueness of solutions to system (\ref{eq:syst_gen}). We need for that to define the following norm.
\begin{definition}\label{def:Nk}
For $k\in[1,\infty]$, and $\ffi \in \mathscr{C}^\infty(\T^d)$, let
      \begin{align*}
N_k(\ffi) := \left(\int_0^{+\infty} \|e^{t\Delta}\ffi\|_k^k\,\dd t\right)^{1/k}.
      \end{align*}
  It is clear thanks to Minkowski's inequality that $N_k$ is a norm,  and the corresponding completion of $\mathscr{C}^\infty(\T^d)$
 will be denoted $\B_k(\T^d)$ in the sequel.
    \end{definition}
    \begin{remark}
This choice of notation $\B_k(\T^d)$ for the completion is here to remind the reader that such a norm is common in the study of Besov spaces. Actually, this norm is equivalent to the one which usually defines $\dot{\B}_{k,k}^{-2/k}(\T^d)$ (see for instance \cite[Theorem 2.34]{bachra} for the euclidean setting). It can be directly checked here that in the case when $k=2$, the norm $N_k$ is equivalent to the $\H^{-1}(\T^d)$ norm.
 \end{remark}

 \begin{thm}\label{existence} Let $d\in \N^*$,  $d_1,d_2>0$ be two constants, and $p,q$ be  two elements of $\R_+[X,Y]$ vanishing at $(0,0)$. We also consider $k > 1 + \frac{d}2$.
\par
Then there exists a constant $\delta>0$ (depending on $d$, $d_1,d_2, p, q$ and $k$) such that for nonnegative initial data $\uin, \vin  \in \L^\infty(\T^d)\cap \H^{-1}(\T^d)$ satisfying 
$$ \|\uin \|_{\infty} + \|\vin \|_{\infty}   + N_k( \Delta \{ [d_1 + p(\uin,\vin)]\,\uin \}) + N_k(\Delta \{ [d_2 + q(\uin,\vin)]\,\vin \})  \leq \delta/2 , $$
there exists a global, bounded, nonnegative for each component, $(u,v) \in \mathscr{C}^0(\R_+;\H^{-1}(\T^d))$,
 dual solution to system \eqref{eq:syst_gen}, in the sense of Definition~\ref{def:dua}. This solution satisfies furthemore
\begin{align*}
\esssup_{t\geq 0} \bigg( \|u(t)\|_\infty +\|v(t)\|_\infty  \bigg) \leq \delta,
  \end{align*}
  and
  \begin{align}\label{ineq:large}
\left\|u(t)-\int_{\T^d}\uin \right\|_2 + \left\|v(t)-\int_{\T^d}\vin \right\|_2 \,\operatorname*{\longrightarrow}_{t\rightarrow +\infty} \,0,
  \end{align}
  with an exponential rate.
 \end{thm}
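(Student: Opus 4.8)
The plan is to set up a Banach fixed-point argument in a space measuring $U=(u,v)$ through a quantity adapted to the duality estimate of Lemma~\ref{lem:dua:bis}. Concretely, I would iterate: given $U^n=(u^n,v^n)$, define $U^{n+1}$ as the dual solution of the linear Kolmogorov equations $\partial_t u^{n+1} - \Delta\big[(d_1+p(u^n,v^n))u^{n+1}\big]=0$ and similarly for $v^{n+1}$, with coefficients $\mu_1 := d_1+p(u^n,v^n)$, $\mu_2 := d_2+q(u^n,v^n)$. Since $p,q$ vanish at $(0,0)$ and have nonnegative coefficients, if $\|u^n\|_\infty,\|v^n\|_\infty\leq\delta$ then $d_i\leq\mu_i\leq d_i + C_p(\delta)$ with $C_p(\delta)\to0$ as $\delta\to0$, so Lemma~\ref{lem:dua:bis} applies and yields $u^{n+1},v^{n+1}\in\Ll^2(\R_+;\L^2)$. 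The key is to control the $\L^\infty$-norm: rewrite each equation as $\partial_t u^{n+1} - d_1\Delta u^{n+1} = \Delta\big[p(u^n,v^n)u^{n+1}\big]$ and use the Duhamel/heat-semigroup representation $u^{n+1}(t) = e^{td_1\Delta}\uin + \int_0^t \Delta e^{(t-s)d_1\Delta}\big[p(u^n,v^n)u^{n+1}\big](s)\,ds$. The first term is bounded by $\|\uin\|_\infty$ by the maximum principle; the second is where the smallness must be extracted. Here the norm $N_k$ enters: by the smoothing estimate for the heat kernel on $\T^d$, $\|\Delta e^{\tau\Delta}g\|_\infty \lesssim \tau^{-1-d/(2k)}\|g\|_{k/(k-1)}$ (or an equivalent dual formulation), and the condition $k>1+d/2$ is exactly what makes the resulting time-integral converge and the product of the nonlinearity with $u^{n+1}$ absorbable. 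One should phrase this as: the map $\ffi\mapsto N_k(\Delta\{[d_1+p(\cdot)]\ffi\})$ applied to the initial data controls $\|u(t)-\fint\uin\|_\infty$ uniformly and with exponential decay, via the spectral gap of $-\Delta$ on $\T^d$ (zero mode excluded).

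The steps, in order, would be: (1) Fix the target space $X_\delta := \{(u,v):\ \esssup_t(\|u\|_\infty+\|v\|_\infty)\leq\delta,\ (u,v)\in\mathscr{C}^0(\R_+;\H^{-1})\}$, or rather a variant where one tracks $U - \fint\Uin$ in a norm built from $N_k$ and $\L^\infty$; establish that the iteration map $\Phi:U^n\mapsto U^{n+1}$ sends $X_\delta$ into itself for $\delta$ small, using the smallness hypothesis on the initial data and the heat-kernel smoothing bounds above. (2) Show $\Phi$ is a contraction on $X_\delta$ in a weaker metric (say the $\mathscr{C}^0(\R_+;\H^{-1})$ or $\Ll^2\L^2$ metric), exploiting that differences of the nonlinear coefficients $p(u^n,v^n)-p(\tilde u^n,\tilde v^n)$ are controlled by $C(\delta)(\|u^n-\tilde u^n\|+\|v^n-\tilde v^n\|)$ with $C(\delta)$ small — this is where the $f\neq0$ case of Lemma~\ref{lem:dua:bis} (with $f$ essentially the difference of the nonlinear fluxes) is used to get the quantitative estimate \eqref{ineq:dua:bisbis}. (3) Pass to the limit: the fixed point $U$ solves \eqref{eq:syst_gen} in the dual sense; nonnegativity of each component is preserved along the iteration (each linear Kolmogorov equation has nonnegative initial data and a maximum/comparison principle, or is handled via the dual formulation), hence passes to the limit. (4) Derive \eqref{ineq:large}: since $\fint u(t)=\fint\uin$ is conserved (integrate the equation), apply the $N_k$/semigroup bound to $u(t)-\fint\uin$ and use the exponential decay $\|e^{t\Delta}g\|_2\leq e^{-ct}\|g\|_2$ on the mean-zero subspace to get the exponential rate, bootstrapping from $\L^2$ via the already-established $\L^\infty$ bound if an $\L^\infty$ decay is wanted.

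**Where the difficulty lies.** The routine part is the fixed-point packaging; the genuine obstacle is the closed nonlinear estimate in step (1) — showing that $N_k(\Delta\{[d_i+\text{poly}]\,\ffi\})$-type norms of the data plus $\L^\infty$-smallness propagate to a uniform-in-time $\L^\infty$ bound on the solution, with the polynomial (not merely quadratic) nonlinearities $p,q$. Each monomial $u^a v^b$ contributes a term $\Delta[u^a v^b u]$ whose heat-Duhamel bound requires distributing $\L^\infty$-norms onto all but one factor and an $\L^{k/(k-1)}$- (equivalently $N_k$-dual) norm onto the last, and one must check that the time-singularity $\tau^{-1-d/(2k)}$ is integrable near $0$ — precisely the hypothesis $k>1+d/2$ — while the spectral gap handles integrability near $+\infty$. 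Getting all these product estimates to close with a single $\delta$ (uniformly over the finitely many monomials of $p,q$) is the technical heart; everything else is standard Banach-space iteration plus the already-cited duality lemma.
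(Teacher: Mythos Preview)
Your contraction-mapping packaging is plausible, and step~(2) is essentially the paper's stability argument. But step~(1) has a real gap: the Duhamel bound you invoke does not close. Writing $\partial_t u^{n+1}-d_1\Delta u^{n+1}=\Delta\big[p(u^n,v^n)u^{n+1}\big]$ and estimating
\[
\Big\|\int_0^t \Delta e^{(t-s)d_1\Delta}\big[p(u^n,v^n)u^{n+1}\big](s)\,\dd s\Big\|_\infty
\]
via a pointwise kernel bound of the form $\|\Delta e^{\tau\Delta}g\|_\infty\lesssim \tau^{-1-d/(2k)}\|g\|_{k'}$ (or $\|g\|_k$) fails, because the exponent $1+d/(2k)$ exceeds $1$ for every $k>0$; the time integral diverges at $\tau=0$ regardless of the hypothesis $k>1+d/2$. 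The correct smoothing statement is $\big\|\int_0^t e^{(t-s)\Delta}f(s)\,\dd s\big\|_{L^\infty(Q_T)}\lesssim \|f\|_{L^k(Q_T)}$ for $k>1+d/2$ (this is the paper's Theorem~\ref{lem:unibound}/Proposition~\ref{thm:heat:main}), i.e.\ with source $f$, not $\Delta f$. Applied to your equation this would require $\Delta[p(u^n,v^n)u^{n+1}]\in L^k(Q_T)$, which involves two spatial derivatives of the iterates that you have no control over.

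The missing idea is to change unknowns to the \emph{fluxes} $\Phi_1=(d_1+p(u,v))u$, $\Phi_2=(d_2+q(u,v))v$. One computes $\partial_t\Phi_i-d_i\Delta\Phi_i=Q_i(u,v)\Delta\Phi_i+R_i(u,v)\Delta\Phi_j$ with $Q_i,R_i$ vanishing at the origin. Tracking $\lambda(T):=\sum_i\|\Delta\Phi_i\|_{L^k(Q_T)}+\|u\|_{L^\infty(Q_T)}+\|v\|_{L^\infty(Q_T)}$, the right-hand side is $O(\lambda(T)^2)$ in $L^k(Q_T)$; now Theorem~\ref{lem:unibound} controls $\|\Phi_i\|_{L^\infty}$ (hence $\|u\|_\infty,\|v\|_\infty$, since $d_i u\le\Phi_i$) and maximal regularity (Theorem~\ref{thm:max}) controls $\|\Delta\widetilde\Phi_i\|_{L^k}$, giving $\lambda(T)\lesssim \lambda(0)+\sum_i N_k(\Delta\Phi_{i,\mathrm{in}})+P(\lambda(T))$ with $P(0)=P'(0)=0$, closed by a continuity/bootstrap argument. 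For the actual construction the paper does not run a Banach iteration at all: it regularises (mollified, truncated coefficients), obtains solutions by Schauder's fixed point with compactness from Aubin--Lions (enabled by an $L^2$ energy estimate valid because $DA^{\mathrm{sym}}(0,0)>0$), uses the a~priori bound above to remove the truncation, and then passes to the limit in the mollification; the exponential decay~\eqref{ineq:large} comes from that same energy estimate plus Poincar\'e--Wirtinger.
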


 \begin{thm}\label{uniqueness} Let $d\in \N^*$, 
 $d_1,d_2>0$ be two constants, and $p,q$ be  two elements of $\R_+[X,Y]$ vanishing at $(0,0)$. Then there exist two constants $\delta, \textnormal{C}>0$ depending on $d$, $d_1,d_2, p, q$ such that (when $T>0$) for any pair $(u_1,v_1)$ and $(u_2,v_2)$ of $\L^\infty([0,T];\L^\infty(\T^d))\cap\mathscr{C}^0([0,T];\H^{-1}(\T^d))$ nonnegative dual solutions to system (\ref{eq:syst_gen}) in the sense of Definition~\ref{def:dua}  which are such that (for $i=1,2$)
   \[ \esssup_{t\in[0,T]} \bigg( \|u_i(t)\|_{\infty} + \|v_{i}(t) \|_{\infty} \bigg) \le \delta,\]
 the following stability estimate for $t\in[0,T]$ holds: 
\begin{multline*} \| (u_1 - u_2)(t)||_{\H^{-1}(\T^d)} + \|(v_1 - v_2)(t)||_{\H^{-1}(\T^d)}\\ \le  ||(u_1 - u_2)(0) ||_{\H^{-1}(\T^d)} + ||(v_1 - v_2)(0) ||_{\H^{-1}(\T^d)}\\
  +\, \textnormal{C}\,T\, \bigg( \bigg|\int_{\T^d} (u_1 - u_2)(0) \bigg| +    \bigg|\int_{\T^d} (v_1 - v_2)(0) \bigg|  \bigg).
  \end{multline*}
\end{thm}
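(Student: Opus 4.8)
The plan is to write the difference of the two solutions as a solution of a linear Kolmogorov-type equation with a source term, and then apply the duality estimate \eqref{ineq:dua:bisbis} of Lemma~\ref{lem:dua:bis}. Set $w := u_1 - u_2$ and $z := v_1 - v_2$. Each equation of \eqref{eq:syst_gen} being of the form $\partial_t u_i - \Delta(a_i u_i) = 0$ with $a_i := d_1 + p(u_i,v_i)$, subtracting the two equations for the $u$-component gives $\partial_t w - \Delta(a_1 w) = \Delta\big((a_1 - a_2)u_2\big)$. Since $p\in\R_+[X,Y]$ is a polynomial, the factor $a_1 - a_2 = p(u_1,v_1) - p(u_2,v_2)$ can be written, using the elementary "telescoping" identity for polynomials, as $\alpha\, w + \beta\, z$ where $\alpha,\beta$ are bounded functions (bounds depending only on $p$, $d$, and the uniform bound $\delta$ on the $\L^\infty$ norms of the $u_i,v_i$); hence the source term is $\Delta f_u$ with $f_u := (\alpha w + \beta z)\,u_2$, and $\|f_u/\sqrt{a_1}\|_2 \lesssim \|u_2\|_\infty (\|w\|_2 + \|z\|_2) \le \delta\, C\,(\|w\|_2 + \|z\|_2)$, using $a_1 \ge d_1$. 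Symmetrically, $\partial_t z - \Delta(a_1' z) = \Delta f_v$ with $a_1' := d_2 + q(u_1,v_1)$ and $\|f_v/\sqrt{a_1'}\|_2 \lesssim \delta\,C\,(\|w\|_2 + \|z\|_2)$. Note also that $\int_{\T^d} w(t)$ and $\int_{\T^d} z(t)$ are conserved in time, equal to $\int_{\T^d} w(0)$, $\int_{\T^d} z(0)$ respectively, since the equations are in divergence form.

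Next I would apply estimate \eqref{ineq:dua:bisbis} to $w$ with $\mu = a_1$ and $f = f_u$: for $t\in[0,T]$,
\begin{multline*}
\|w(t)\|_{\H^{-1}}^2 + \int_0^t\!\!\int_{\T^d} a_1 w^2 \le \|w(0)\|_{\H^{-1}}^2 + \Big(\int_{\T^d} w(0)\Big)^2 \int_0^t\!\!\int_{\T^d} a_1 + \int_0^t\!\!\int_{\T^d}\frac{f_u^2}{a_1}.
\end{multline*}
Using $a_1 \le d_1 + P$ with $P$ a bound on $p$ over the relevant range (depending on $\delta$), the middle term is bounded by $C\,T\,(\int w(0))^2$, and the last term by $C\delta^2 \int_0^t (\|w\|_2^2 + \|z\|_2^2)$. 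On the left, $\int_0^t\!\int a_1 w^2 \ge d_1 \int_0^t \|w\|_2^2$. Adding the analogous inequality for $z$, one gets
\begin{multline*}
\|w(t)\|_{\H^{-1}}^2 + \|z(t)\|_{\H^{-1}}^2 + d_1\!\int_0^t\!\|w\|_2^2 + d_2\!\int_0^t\!\|z\|_2^2 \le \|w(0)\|_{\H^{-1}}^2 + \|z(0)\|_{\H^{-1}}^2 \\+ C\,T\Big(\big|{\textstyle\int} w(0)\big|^2 + \big|{\textstyle\int} z(0)\big|^2\Big) + C\delta^2\!\int_0^t\!\big(\|w\|_2^2 + \|z\|_2^2\big).
\end{multline*}
Choosing $\delta$ small enough (depending on $d_1,d_2,p,q,d$) so that $C\delta^2 \le \tfrac12 \min(d_1,d_2)$, the last term is absorbed into the left-hand side, yielding
\begin{equation*}
\|w(t)\|_{\H^{-1}}^2 + \|z(t)\|_{\H^{-1}}^2 \le \|w(0)\|_{\H^{-1}}^2 + \|z(0)\|_{\H^{-1}}^2 + C\,T\Big(\big|{\textstyle\int} w(0)\big|^2 + \big|{\textstyle\int} z(0)\big|^2\Big).
\end{equation*}
Taking square roots and using $\sqrt{a+b+c}\le\sqrt a+\sqrt b+\sqrt c$ and $\sqrt{CT}\,(|\alpha|^2+|\beta|^2)^{1/2} \le \sqrt{CT}(|\alpha|+|\beta|)$ — after also replacing $\sqrt{T}$-type terms by the stated linear-in-$T$ dependence, which is harmless up to adjusting $C$ if one restricts attention to, say, $T\ge 1$, or alternatively keeps a $\sqrt T + T$ dependence that is dominated by $CT$ only for $T$ bounded below; the cleanest route is to note that the middle term $\int_0^t\!\int a_1$ is in fact $\le C t \le CT$ directly, so it is genuinely linear in $T$ and the square-root produces $\sqrt{CT}\,|\int w(0)|$, which is then bounded by $CT|\int w(0)|$ for $T\ge 1$ and by $C|\int w(0)|$ for $T\le 1$, in both cases $\le C(1+T)|\int w(0)| \le C'T|\int w(0)|$ is false for small $T$ — so the correct reading is that the constant $C$ in the theorem absorbs a factor handling small $T$, or the statement implicitly means $\max(1,T)$; I will present the estimate with the $\sqrt{CT}$ factor and remark on this — one obtains the claimed stability estimate.

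The main obstacle is the bookkeeping in the first step: one must verify that the polynomial difference $p(u_1,v_1)-p(u_2,v_2)$ genuinely splits as $\alpha w + \beta z$ with $\L^\infty$ coefficients controlled purely in terms of $p$ and $\delta$, which is where the hypothesis $p,q\in\R_+[X,Y]$ (finitely many monomials, so uniform Lipschitz bounds on the compact range $[0,\delta]^2$) is used. The second genuine subtlety is that the source term $\Delta f_u$ must be a legitimate right-hand side for Lemma~\ref{lem:dua:bis}, i.e. $f_u \in \Ll^2(\R_+;\L^2(\T^d))$ (equivalently, on $[0,T]$): this follows since $w,z\in\L^\infty([0,T];\L^2(\T^d))$ because they are differences of $\L^\infty$-in-time, $\L^\infty$-in-space functions, and $u_2$ is likewise bounded — so $f_u\in\L^\infty([0,T];\L^\infty(\T^d))\subset\L^2([0,T];\L^2(\T^d))$. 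Once these two points are in place, the rest is the duality estimate plus an absorption argument requiring $\delta$ small, exactly as in \cite{bmmh} for the linear SKT case, and the linear-in-$T$ term is simply the signature of the "constant" (zero-frequency) mode which is not dissipated.
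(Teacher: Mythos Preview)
Your approach is essentially identical to the paper's: you write the same difference equations, apply the same duality estimate (Lemma~\ref{lem:dua:bis}) with $\mu = d_1 + p(u_1,v_1)$, use the local Lipschitz bound on $p,q$ together with $\|u_2\|_\infty \le \delta$ to control the source, and absorb via smallness of $\delta$. The paper packages this as Proposition~\ref{pr2}, which is exactly your squared inequality.

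Your hesitation about the $\sqrt{T}$ versus $T$ dependence is well founded and not a defect of your argument: the paper's own Proposition~\ref{pr2} yields the squared estimate with a term linear in $T$, and the passage to the unsquared form stated in Theorem~\ref{uniqueness} is not spelled out there either. Taking square roots genuinely produces a $\sqrt{T}$ factor in front of $|\int w(0)| + |\int z(0)|$, so the linear-in-$T$ statement of Theorem~\ref{uniqueness} should be read either as a slight looseness (e.g.\ $C\sqrt{T}$ or $C(1+T)$), or as holding with a constant depending on a lower bound for $T$. You should simply state the squared estimate (as the paper does in Proposition~\ref{pr2}) and note that uniqueness follows immediately from it, rather than trying to force the exact $CT$ form.
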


\begin{remark}
We have chosen a simple formulation for Theorem \ref{uniqueness}, in which both solutions are small in $\L^{\infty}$, for the sake of clarity. However, our method actually proves a finer statement in which only one of the two solutions has to be small (the other one being given), see Proposition~\ref{prop:uni}.
\end{remark}
Combining Theorem~\ref{uniqueness} with Theorem~\ref{existence}, we infer the following corollary of well-posedness for the system.
\begin{cor}
Under the assumptions of Theorem~\ref{existence}, if $\delta$ is chosen sufficiently small, the solutions given in Theorem~\ref{existence}  satisfy the stability estimate of Theorem~\ref{uniqueness}. In particular, such solutions are unique (among those satisfying the assumptions required in Theorem~\ref{uniqueness}).  
\end{cor}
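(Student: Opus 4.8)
The plan is to run the two theorems in cascade, the only real work being to match the two smallness constants. Write $\delta_1$ for the threshold furnished by Theorem~\ref{existence} (depending on $d,d_1,d_2,p,q,k$) and $\delta_2$ for the one furnished by Theorem~\ref{uniqueness} (depending on $d,d_1,d_2,p,q$). The single point that needs a word of care — and the step I expect to be the main obstacle, modest as it is — is that the solution produced by Theorem~\ref{existence} must be small enough, in the sense $\esssup_{t\ge0}\big(\|u(t)\|_\infty+\|v(t)\|_\infty\big)\le\delta_2$, to be an admissible competitor in Theorem~\ref{uniqueness}; there is no reason a priori that $\delta_1\le\delta_2$. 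To remove this, I would go back to the construction underlying Theorem~\ref{existence}: it is a perturbative fixed point in which the size of the solution is controlled by (a constant times) the size of the data, so its conclusion remains valid if the threshold $\delta_1$ is lowered to any smaller positive number. We therefore fix once and for all $\delta:=\min(\delta_1,\delta_2)$ — this is the ``$\delta$ sufficiently small'' of the statement — and work from now on with the version of Theorem~\ref{existence} attached to this $\delta$.

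Next, take nonnegative $\uin,\vin\in\L^\infty(\T^d)\cap\H^{-1}(\T^d)$ satisfying the smallness hypothesis of Theorem~\ref{existence} relative to this $\delta$, and let $(u,v)$ be the global dual solution it provides: it belongs to $\mathscr{C}^0(\R_+;\H^{-1}(\T^d))$, is bounded and nonnegative componentwise, and satisfies $\esssup_{t\ge0}\big(\|u(t)\|_\infty+\|v(t)\|_\infty\big)\le\delta\le\delta_2$. Fix any $T>0$. The restriction of $(u,v)$ to $[0,T]$ then lies in $\L^\infty([0,T];\L^\infty(\T^d))\cap\mathscr{C}^0([0,T];\H^{-1}(\T^d))$, is a nonnegative dual solution of \eqref{eq:syst_gen} in the sense of Definition~\ref{def:dua}, and obeys $\esssup_{t\in[0,T]}\big(\|u(t)\|_\infty+\|v(t)\|_\infty\big)\le\delta_2$. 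Hence Theorem~\ref{uniqueness} applies on $[0,T]$ with $(u_1,v_1)=(u,v)$ and $(u_2,v_2)$ any second nonnegative dual solution on $[0,T]$ satisfying the same $\delta_2$-bound — which is precisely the assertion that the solutions of Theorem~\ref{existence} satisfy the stability estimate of Theorem~\ref{uniqueness}.

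For the uniqueness clause, suppose $(u_2,v_2)$ is such a competitor with, in addition, $u_2(0)=\uin$ and $v_2(0)=\vin$ in $\H^{-1}(\T^d)$. Then the right-hand side of the inequality in Theorem~\ref{uniqueness} vanishes identically — both the $\H^{-1}(\T^d)$ terms and the terms $\big|\int_{\T^d}(u_1-u_2)(0)\big|$, $\big|\int_{\T^d}(v_1-v_2)(0)\big|$ are zero — so $\|(u-u_2)(t)\|_{\H^{-1}(\T^d)}+\|(v-v_2)(t)\|_{\H^{-1}(\T^d)}=0$ for every $t\in[0,T]$. Since $T>0$ was arbitrary and the functions involved are bounded, $(u,v)$ and $(u_2,v_2)$ coincide as elements of $\Ll^\infty(\R_+;\L^\infty(\T^d))$. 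Beyond the constant‑matching remark of the first paragraph, everything here is a verbatim substitution into the hypotheses of the two quoted theorems, so I do not anticipate any genuine difficulty.
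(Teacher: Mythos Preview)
Your argument is correct and is exactly the intended one: the paper offers no proof of this corollary, treating it as an immediate combination of the two theorems, and your handling of the only delicate point --- that the $\delta$ of Theorem~\ref{existence} may be lowered (the bootstrap Lemma~\ref{lem:boot} and the constraint $\delta\le\delta_A$ in the proof of Theorem~\ref{existence} are both stable under shrinking $\delta$) --- is precisely what ``$\delta$ chosen sufficiently small'' means here.
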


SKT-like systems have been studied for a long time. As they were originally introduced in \cite{skt} for the emergence of patterns formation, the first articles focused on their stationary solutions, see \cite{mami,mim,shi} for instance. Existence of solutions for these systems has been the source of a large literature which interestingly enough only started 5 years after the publication of \cite{skt}. Indeed, the first existence result seems to be the one of Kim in \cite{kim1984smooth}, which established that there are global (strong) solutions but only for equal diffusion rates, in dimension $1$. After several partial results of this kind, an important step was made by Amann \cite{amann90,amann90b}, paving the way for a systematic local existence theory (with a blowup criterion) under a mild elliptic condition on the system (also known as \og normal ellipticity\fg{}, that is Petrovskii's condition for parabolic systems). In the wake of Amann's theory, a substantial number of articles attempted to prove the existence of global strong solutions of SKT-like systems through a uniform control of Amann's blowup criterion, generally at the price of restrictive constraints on the system. For instance, in the so-called triangular case, that is when cross diffusion is present only in one of the two equations of the system, Lou, Ni and Wu obtained existence and regularity in dimension $2$ in \cite{lnw}, whereas later Choi, Lui and Yamada first got rid of the restriction on the dimension in \cite{cly1}, at the price of considering small cross diffusion rates. More recently, \cite{HoanNguPha} established the existence of global smooth solutions for triangular SKT-like systems when some self diffusion is present,  and
\cite{guementre} proved the same kind of results for natural extensions of triangular SKT-like systems.

\medskip

In the general (non triangular) case, global weak solutions of the SKT system were proven to exist in full generality in \cite{CJ} ; a few years later \cite{DLM,DLMT}, similar global solutions were built for some systems having a structure like \eqref{eq:syst_gen}, with possibly non-polynomial diffusion rates (see also \cite{LM,chen_daus_jungel,chenjunwan} for multi-species cases). One common aspect of all these works is that one assumes the existence of an \emph{entropy structure} for the studied system, which gives rise to a convex Lyapunov functional. Its dissipation  allows a control of the gradients of the solutions. 

\medskip

Be it for local, global, weak or strong solutions, in all the previous results, the considered systems always satisfy the normal ellipticity condition. Articles dealing with parabolic systems potentially failing to satisfy this condition are less frequent in the literature  (see \cite{druet_hopf_jungel,xu} for recent examples). In a perturbative setting, it was first noticed in \cite{BLMP} (in dimension $1$), that one can obtain global strong solutions, when the initial data are small. There, the very same energy estimate that one wishes to obtain thanks to some uniform bound can precisely be used to prove those uniform bounds ; this is due to the embedding $\H^1(\T)\hookrightarrow\L^\infty(\T)$ and it allows to prove Theorem 2.1 in \cite{BLMP} that we reproduce here in the specific case of (conservative) system \eqref{eq:syst_gen} for the sake of clarity (we use the notation $\lesssim$ for ``less than a constant times'' here and in the rest of the paper).

\begin{thm}\label{thmo} (consequence of  \cite[Theorem 2.1]{BLMP})
  In dimension $d=1$, there exists $\delta >0$ such that, for any smooth initial data $\Uin:=(\uin,\vin)$ satisfying $\|\Uin\|_\infty + \|\nabla \Uin\|_2\leq \delta$, any smooth solution $U:=(u,v)$ to system \eqref{eq:syst_gen} defined on $\R_+ \times \T^d$ (initialized by $U_{in}$) satisfies
  \begin{align*}
\sup_{t\geq 0}\|U(t)\|_\infty \lesssim \delta,\qquad \sup_{t\geq 0} \|\nabla U(t)\|_2 \lesssim \delta.
    \end{align*}
  \end{thm}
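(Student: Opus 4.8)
The plan is to obtain Theorem~\ref{thmo} as the specialization of \cite[Theorem 2.1]{BLMP} to the conservative system \eqref{eq:syst_gen}, so that the only thing to check is the structural hypothesis of that perturbative result. Writing \eqref{eq:syst_gen} as $\partial_t U - \Delta \mathbf A(U) = 0$ with $\mathbf A(U) = \big((d_1 + p(U))\,u,\ (d_2 + q(U))\,v\big)$, I would compute the Jacobian $D\mathbf A(U)$ and note that, since $p$ and $q$ vanish at $(0,0)$, one has $D\mathbf A(0) = \mathrm{diag}(d_1,d_2)$, which is positive definite because $d_1,d_2>0$; moreover $\mathbf A$ is polynomial, hence smooth. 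This local positivity at the reference state $U=0$ is exactly what the method of \cite{BLMP} requires, and the conclusion follows. (It is worth stressing that $D\mathbf A(U)$ may fail to be normally elliptic for large $U$ when $p$ depends on $v$ and $q$ on $u$, which is precisely why only the perturbative regime is reached.)

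For completeness I would recall the underlying a priori estimate. Testing the first equation of \eqref{eq:syst_gen} with $-\Delta u$, the second with $-\Delta v$, and summing gives
\[
\tfrac12\tfrac{\dd}{\dd t}\|\nabla U\|_2^2 + \int_{\T^d} (\Delta U)^{\!\top} D\mathbf A(U)\,\Delta U = -\int_{\T^d} Q(\nabla U,\nabla U)\cdot \Delta U,
\]
where $Q$ gathers the terms quadratic in $\nabla U$ (whose coefficients are the second derivatives of $\mathbf A$, hence polynomials in $U$). As long as $\|U(t)\|_\infty$ stays below a threshold $\delta_0$, continuity around $U=0$ gives coercivity, $\int_{\T^d}(\Delta U)^{\!\top} D\mathbf A(U)\,\Delta U \ge \tfrac12\min(d_1,d_2)\|\Delta U\|_2^2$; the right-hand side is bounded by $C\,\|\nabla U\|_4^2\,\|\Delta U\|_2$, and the one-dimensional Gagliardo--Nirenberg inequality $\|\nabla U\|_4^2 \lesssim \|\nabla U\|_2^{3/2}\|\Delta U\|_2^{1/2} + \|\nabla U\|_2^2$ combined with Young's inequality lets one absorb it into the coercive term, leaving a remainder $C\big(\|\nabla U\|_2^4 + \|\nabla U\|_2^6\big)$.

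To close the argument I would use that each $\partial_{x_i}U$ has zero average on $\T^d$, so that Poincaré's inequality yields $\|\nabla U\|_2 \lesssim \|\Delta U\|_2$; hence $y:=\|\nabla U\|_2^2$ satisfies $y' + \kappa y \lesssim y\,(y + y^2)$, and if $\|\nabla \Uin\|_2 \le \delta$ is small enough, $y$ cannot grow (in fact it decays exponentially), so $\sup_{t}\|\nabla U(t)\|_2 \le \delta$. The $\L^\infty$ bound then comes from the one-dimensional embedding: splitting $u(t)$ into its spatial average $\bar u(t)$ and the fluctuation, mass conservation gives $\bar u(t)=\bar u(0)$ with $0\le \bar u(t)\le \|\uin\|_\infty\le\delta$, while $\|u(t)-\bar u(t)\|_\infty \lesssim \|\nabla u(t)\|_2 \le \delta$, so $\|U(t)\|_\infty \lesssim \delta$. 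The two smallness facts used above — $\|U(t)\|_\infty<\delta_0$ for coercivity and $\|\nabla U(t)\|_2$ small for the Grönwall step — are reconciled by a standard continuity (bootstrap) argument in $t$.

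The genuinely one-dimensional ingredient, and the only delicate point, is the use of $\H^1(\T)\hookrightarrow\L^\infty(\T)$ to convert the $\H^1$ control into the $\L^\infty$ control that feeds back into the coercivity threshold; this is exactly what fails for $d\ge 2$ and motivates the approach of the rest of the paper. If one wanted to reprove Theorem~\ref{thmo} from scratch rather than invoke \cite[Theorem 2.1]{BLMP}, the remaining work is purely bookkeeping: choosing $\delta$, $\delta_0$ and the Young parameter so that the thresholds in the continuity argument are mutually compatible.
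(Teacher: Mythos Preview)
The paper does not supply its own proof of Theorem~\ref{thmo}: it is stated verbatim as a consequence of \cite[Theorem~2.1]{BLMP}, and the surrounding text only indicates the mechanism (the embedding $\H^1(\T)\hookrightarrow\L^\infty(\T)$ feeding the energy estimate back into the uniform bound). Your proposal does exactly what is called for in this situation --- you verify that \eqref{eq:syst_gen} satisfies the structural hypothesis of the cited result (namely $D\mathbf A(0)=\mathrm{diag}(d_1,d_2)>0$, with $\mathbf A$ smooth) --- and this is correct. The supplementary sketch of the underlying a priori estimate (test with $-\Delta U$, coercivity of $D\mathbf A(U)$ near $0$, one-dimensional Gagliardo--Nirenberg, Poincar\'e on $\nabla U$, bootstrap) is likewise correct and matches the informal description the paper gives just before stating the theorem; it is more than the paper itself provides.
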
 

  \begin{remark}
This estimate allows to build global uniformly bounded solutions for all times, after introducing an adequate approximate scheme. 
\end{remark}

Note that our Theorem \ref{existence} can be seen as an extension of Theorem \ref{thmo} in the case of dimension $d \ge 1$, since it is possible to take $k=2$ when $d=1$, so that the norms used in the two theorems become equivalent.
\medskip

We explain here how our result could probably be generalized. 
\medskip

\begin{itemize}
\item
First, the fact that $p,q$ are polynomials vanishing at point $(0,0)$ with nonnegative coefficients is not essential to our analysis (though it helps greatly in the presentation of the results). It is indeed sufficient to suppose that $p,q$ are nonnegative functions of class $\mathscr{C}^1((\R_+)^2)$ which vanish at point $(0,0)$ (note that the explicit estimate of stability has then to be slightly changed). 
\medskip

\item
Secondly, we think that our analysis could be reproduced, without much change, when systems of more than two equations are considered 
(in this direction, see for instance \cite{chenjunwan} for the existence of global weak solutions for multi-species system).
\medskip

\item
Finally, we insist that our methods cannot work (at least when one looks for global in time solutions) if reaction terms are introduced in the cross diffusion systems, except in very specific cases. For example, if one considers competition terms of Lotka-Volterra type (such as they naturally appear in SKT-type models), one cannot hope to have solutions which remain small when the time becomes large, except if the coefficients appearing in the competition terms are such that associated steady solutions are themselves small. We however think that our result can be extended in at least one general situation, namely when the reaction terms are respectively of the form  $-u\, s_1(u,v)$, $-v\,s_2(u,v)$, where $s_1,s_2 \in \R_+[X,Y]$. Such a form can correspond for example, in terms of modeling, to situations in population dynamics in which no new individuals are introduced (no births).
\end{itemize}

\bigskip

In section \ref{sec:bounduni}, we present the main {\it{a priori}} estimate underlying the proof of Theorem \ref{existence}, while we rigorously prove this Theorem in section \ref{sec3}. Then, the proof of Theorem \ref{uniqueness} is written is section \ref{subsec:locstab}, and some counterexamples are presented in section \ref{sec-cc}. Finally, Appendices A, B and C summarize 
some classical results (mainly from the theory of parabolic PDEs), while Appendix D is dedicated to the presentation of an alternative to the {\it{a priori}} estimate of  section \ref{sec:bounduni}.

\section{Global uniform \emph{a priori} estimates for small initial data}\label{sec:bounduni}

In this section, we write down the \emph{a priori} estimate which enables to show the existence result (that is, Theorem \ref{existence}). We recall Definition~\ref{def:Nk} of the $N_k$ norm, for $k\in[1,\infty]$, and write down the

\begin{prop}\label{prop:uni}
Let $d\in \N^*$,  $d_1,d_2>0$ be two constants, and $p,q$ be  two elements of $\R_+[X,Y]$ vanishing at $(0,0)$. We also consider $k > 1 + \frac{d}2$.
\par
Then there exists a constant $\delta>0$ (depending on $d$, $d_1,d_2, p, q$ and $k$) such that for
any smooth nonegative (for each component) initial data $U_{in}:=(\uin,\vin)$ satisfying
  \begin{multline}\label{ineq:init}
\|\uin\|_{\infty} + \|\vin\|_{\infty} \\+ N_k(\Delta \{[d_1 + p(\uin,\vin)]\,\uin \} ) + N_k( \Delta \{ [d_2 + q(\uin,\vin)]\,\vin \}) 
< \frac{\delta}{2},
\end{multline}
any smooth nonnegative  (for each component) solution $U:=(u,v)$ (defined on $\R_+ \times \T^d$) to  system \eqref{eq:syst_gen}  (initialized by $U_{in} := (\uin, \vin)$), satisfies
  \begin{align*}
\sup_{t\geq 0}\|U(t, \cdot)\|_\infty < \delta.
    \end{align*}
  \end{prop}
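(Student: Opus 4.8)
The plan is to run a continuity (bootstrap) argument in time, using the duality estimate of Lemma~\ref{lem:dua:bis} together with the smoothing properties of the heat semigroup encoded in the $N_k$ norm. First I would set up the quantities to be propagated: let $M(t) := \esssup_{s\in[0,t]}(\|u(s)\|_\infty + \|v(s)\|_\infty)$, and observe that $M(0) < \delta/2$ (from \eqref{ineq:init}, ignoring the $N_k$ terms which only help) and that $t\mapsto M(t)$ is continuous because $U$ is smooth. The goal is to show that if $M(t) \le \delta$ on some interval then in fact $M(t) < \delta$ there, for $\delta$ small enough, which by connectedness forces $M(t) < \delta$ for all $t \ge 0$.

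The heart of the matter is the $L^\infty$ bound. Writing the first equation as $\partial_t u - \Delta[(d_1 + p(u,v))u] = 0$, I would use the Duhamel-type representation against the heat semigroup: since $(d_1+p)u = d_1 u + (\text{higher order})$, one writes $u(t) = e^{t\Delta}\uin + \int_0^t e^{(t-s)\Delta}\Delta[p(u,v)u](s)\,ds$ — or, more in the spirit of the $N_k$ norm, one compares $u$ to the solution of the linear heat-type equation. Applying $L^\infty$ estimates for the inhomogeneous heat equation with a source in divergence-of-divergence form, the $N_k$ norm of $\Delta\{[d_i + \cdot]\cdot\}$ of the data controls the corresponding contribution of the initial data, and the nonlinear source $\Delta[p(u,v)u]$ is estimated by $\|p(u,v)u\|$ in an appropriate space; since $p$ is a polynomial vanishing at $(0,0)$ with nonnegative coefficients and $M(t)\le\delta$, one has $\|p(u,v)u\|_\infty \lesssim M(t)\cdot(\text{polynomial in }\delta) = o(1)\cdot M(t)$ as $\delta\to 0$. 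The condition $k > 1 + d/2$ is exactly what makes $\B_k(\T^d)$ (equivalently $\dot B_{k,k}^{-2/k}$) embed, after one application of the parabolic smoothing (two derivatives gained in the right scaling), into $L^\infty$ in a time-integrated-then-pointwise sense; concretely it ensures the maximal-regularity/heat-kernel estimate $\int_0^\infty\|e^{t\Delta}\Delta g\|_\infty\,dt$ type bounds close with the available integrability. I would also need the duality estimate \eqref{ineq:dua:bisbis} (with $f=0$) to control $\|u\|_{L^2_{t,x}}$ and $\|v\|_{L^2_{t,x}}$ uniformly in terms of the $\H^{-1}$ norm of the data plus the mass — this feeds the nonlinear term, because $\Delta[p(u,v)u]$ needs to be estimated in a space like $L^1_t(\B_k)$ or $L^2_t(L^2)$, and the polynomial structure lets one interpolate between the $L^\infty$ bound $M(t)$ and the $L^2_{t,x}$ bound.

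So the key steps, in order, are: (1) define $M(t)$, record $M(0)<\delta/2$ and continuity; (2) assume $M(T^*)\le\delta$ on a maximal interval $[0,T^*)$ and derive the duality estimate bounds on $\|U\|_{L^2((0,T^*)\times\T^d)}$ from Lemma~\ref{lem:dua:bis}; (3) write the Duhamel formula for each component against the heat semigroup and take $N_k$-type / $L^\infty$ estimates, splitting the right-hand side into the data contribution (bounded by the $N_k$-terms in \eqref{ineq:init}, hence $<\delta/2$) and the nonlinear contribution; (4) bound the nonlinear contribution by $C\,\varphi(\delta)\,M(T^*)$ where $\varphi(\delta)\to 0$ as $\delta\to0$, using the polynomial structure of $p,q$, their vanishing at $(0,0)$, the bound $M\le\delta$, and the $L^2_{t,x}$ bound; (5) choose $\delta$ small enough that $C\varphi(\delta) < 1/2$, concluding $M(T^*) \le \delta/2 + (1/2)M(T^*)$, i.e. $M(T^*) \le \delta < \delta$ strictly (after a small absorption), contradicting maximality unless $T^* = \infty$; (6) conclude $\sup_{t\ge0}\|U(t)\|_\infty < \delta$.

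The main obstacle I expect is step (3)–(4): setting up the right functional-analytic framework so that the heat-semigroup estimates in the $\B_k$/$N_k$ scale close cleanly. One has to be careful that the nonlinearity $\Delta[p(u,v)u]$ is of the form $\Delta(\text{bounded function})$, and the gain from the heat semigroup acting on such a source — namely $\int_0^t e^{(t-s)\Delta}\Delta[\cdots]ds$ — has to be estimated in $L^\infty_x$ uniformly in $t$; this requires the precise heat-kernel smoothing $\|e^{\tau\Delta}\Delta h\|_\infty \lesssim \tau^{-1}\|h\|_\infty$ near $\tau=0$ (non-integrable!) to be tamed by exploiting either the zero mass of the increments, the $L^2_{t,x}$ integrability of $u,v$ (so the source is better than merely $L^\infty$), or by working directly with the $N_k$ norm of the source and invoking the embedding valid for $k > 1 + d/2$. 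Balancing these — deciding exactly which norm of $\Delta[p(u,v)u]$ to estimate and verifying that the polynomial factors really produce a small prefactor rather than just a bounded one — is where the real work lies, and is presumably the content of the main lemma of section~\ref{sec:bounduni}.
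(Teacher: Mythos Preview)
Your overall continuity/bootstrap scheme is right, but the choice of bootstrap quantity and the functional setup do not close. Tracking only $M(t)=\sup_{[0,t]}\|U\|_\infty$ and writing Duhamel for $u$ with source $\Delta[p(u,v)u]$ leaves you with a source on which you control no derivatives: to get $L^\infty$ back from the heat flow via Theorem~\ref{lem:unibound} you need the source in $\L^k(Q_T)$ with $k>1+d/2$, and $\Delta[p(u,v)u]$ is simply not controlled by $M$ or by the $\L^2_{t,x}$ duality bound. The duality estimate of Lemma~\ref{lem:dua:bis} is actually not used at all in this proposition (it is the engine of the stability result, not of the a priori bound), and in any case it only yields $\L^2$, not $\L^k$.

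The missing idea is to change unknowns to $\Phi_1:=(d_1+p(u,v))u$ and $\Phi_2:=(d_2+q(u,v))v$. A direct computation gives $\partial_t\Phi_i - d_i\Delta\Phi_i = Q_i(u,v)\,\Delta\Phi_i + R_i(u,v)\,\Delta\Phi_j$ with $Q_i,R_i$ polynomials vanishing at $(0,0)$. Now enlarge the bootstrap quantity to
\[
\lambda(T):=\|\Delta\Phi_1\|_{\L^k(Q_T)}+\|\Delta\Phi_2\|_{\L^k(Q_T)}+\|u\|_{\L^\infty(Q_T)}+\|v\|_{\L^\infty(Q_T)}.
\]
The source above is then $\le P(\lambda(T))$ in $\L^k(Q_T)$ for some $P$ with $P(0)=P'(0)=0$. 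Maximal regularity (Theorem~\ref{thm:max}) applied to $\Phi_i-e^{t\Delta}\Phi_{i,\mathrm{in}}$ returns $\|\Delta\Phi_i\|_{\L^k(Q_T)}\lesssim N_k(\Delta\Phi_{i,\mathrm{in}})+P(\lambda(T))$, which is exactly why those $N_k$ terms appear in the hypothesis. For the $\L^\infty$ part, Theorem~\ref{lem:unibound} (this is where $k>1+d/2$ enters) gives $\|\Phi_i\|_{\L^\infty(Q_T)}\lesssim \lambda(0)+P(\lambda(T))$, and nonnegativity of $u,v$ and of the coefficients of $p,q$ gives $d_1 u\le \Phi_1$, $d_2 v\le \Phi_2$, hence $\|u\|_\infty+\|v\|_\infty\lesssim\|\Phi_1\|_\infty+\|\Phi_2\|_\infty$. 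Altogether $\lambda(T)\lesssim \lambda(0)+\sum_i N_k(\Delta\Phi_{i,\mathrm{in}})+P(\lambda(T))$, and Lemma~\ref{lem:boot} closes the bootstrap.
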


  \begin{proof}
We introduce the notations
\begin{align*}
  \Phi_1 &:= \Phi_1(u,v) = (d_1+p(u,v))u,\\
  \Phi_2 &:= \Phi_2(u,v) = (d_2+q(u,v))v,
\end{align*}
together with $\Phi_{i,\mathrm{in}} := \Phi_i(\uin, \vin)$. 
A straightforward computation shows the existence, for $i=1,2$, of polynomials $Q_i,R_i\in\R[X,Y]$ for which, for $i\neq j \in\{1,2\}$,
\begin{align}\label{eq:phi}
  \partial_t \Phi_i - d_i \Delta\Phi_i =  Q_i(u,v) \Delta \Phi_i + R_i(u,v) \Delta \Phi_j.
\end{align}
 
Since the polynomials $p,q,$  vanish at $(0,0)$, one checks that this is also the case of the polynomials $Q_i$ and $R_i$.
\medskip

 From now on, $P$ will denote an element of $\R_+[X]$ satisfying $P(0)=P'(0) = 0$. This polynomial may vary (a finite number of times) from line to line in the coming proof.

\medskip

We introduce, for $T>0$, $Q_T := [0,T] \times \T^d$, and
\begin{align*}
  \lambda(T) := \|\Delta\Phi_1\|_{\L^k(Q_T)} + \|\Delta\Phi_2\|_{\L^k(Q_T)} + \|u\|_{\L^\infty(Q_T)} +\|v\|_{\L^\infty(Q_T)}.
\end{align*}
Also, $\lambda(0) = ||\uin||_{\infty} +  ||\vin||_{\infty} $, so that (since $\uin$ and $\vin$ are bounded)  $ \sum_{i=1,2}\|\Phi_{i,\mathrm{in}}\|_\infty\ \lesssim \lambda(0)).$ 
\par
Thanks to identity \eqref{eq:phi},
\begin{align}\label{ineq:phi}
\|\partial_t \Phi_i-d_i\Delta\Phi_i\|_{\L^k(Q_T)} \leq P(\lambda(T)). 
\end{align}
Since the functions $\Phi_i$ solve the equations \eqref{eq:phi} (with initial data $\Phi_{i,\mathrm{in}}$), we can use Theorem~\ref{lem:unibound} of Appendix C (semigroup property of the heat equation in the torus)
 to  get the bound
\begin{align*}
  \sum_{i=1,2}  \|\Phi_i\|_{\L^\infty(Q_T)} &\lesssim \sum_{i=1,2}\|\Phi_{i,in}\|_\infty +  P(\lambda(T))+\sum_{i=1,2} \sup_{s\in[0,T]} \left|\int_{\T^d} \Phi_i(s)\right| \\
  &\lesssim \lambda(0) + P(\lambda(T)) + \sum_{i=1,2} \sup_{s\in[0,T]} \left|\int_{\T^d} \Phi_i(s)\right|.
\end{align*}
Since $\int_{\T^d} u(t, \cdot)$ is conserved in the evolution of the system,
\begin{align*} 
  \left|\int_{\T^d} \Phi_1(s)\right| &\leq d_1 \int_{\T^d} \uin + \int_{\T^d} p(u(s),v(s))u(s) \\
  &\lesssim \lambda(0) + P(\lambda(T)). 
\end{align*}
We have a similar estimate for the average of $\Phi_2$, and we therefore infer  
  \begin{equation} \label{inr}
\sum_{i=1,2}\|\Phi_i\|_{\L^\infty(Q_T)} \lesssim \lambda(0) + P(\lambda(T)).
  \end{equation}
Since $u$ and $v$ are nonnegative and $p,q\in\R_+[X,Y]$, we know moreover that
\begin{align} \label{nna}
\|u\|_{\L^\infty(Q_T)}+\|v\|_{\L^\infty(Q_T)} \lesssim \|\Phi_1\|_{\L^\infty(Q_T)} + \|\Phi_2\|_{\L^\infty(Q_T)}.
\end{align}
Combining this estimate with  the bound (\ref{inr}),   we  control a part of $\lambda(T)$ (in terms of a nonlinear function of $\lambda(T)$ itself). 
\medskip

Defining now $\widetilde{\Phi}_i := \Phi_i - e^{t\Delta}\Phi_{i,\mathrm{in}}$, we see thanks to (\ref{ineq:phi}) that
\begin{align}\label{ineq:phi_tilde}
\|\partial_t \widetilde{\Phi}_i-d_i\Delta \widetilde{\Phi}_i\|_{\L^k(Q_T)} \leq P(\lambda(T)). 
\end{align}
Applying the maximal regularity estimate of Theorem~\ref{thm:max} of Appendix C to $\widetilde{\Phi}_i$, 
we get the estimate
\begin{align}\label{maxregphiti}
\| \Delta \widetilde{\Phi}_i\|_{\L^k(Q_T)} \leq P(\lambda(T)). 
\end{align}
 Then 
\begin{align*}
\sum_{i=1,2} \|\Delta \Phi_i\|_{\L^k(Q_T)} \lesssim_{k,d} \sum_{i=1,2} \|\Delta \widetilde{\Phi}_i\|_{\L^k(Q_T)}
+ \sum_{i=1,2}  \| \Delta  e^{t\Delta}\Phi_{i,\mathrm{in}} \|_{\L^k(Q_T)}  
\end{align*}
\begin{equation}\label{nnb}
  \lesssim_{k,d}  P(\lambda(T)) + \sum_{i=1,2} N_k(\Delta\Phi_{i,\mathrm{in}}),
\end{equation}
which gives a control on the last part of $\lambda(T)$. 
\medskip

All in all, using estimates (\ref{inr}), (\ref{nna}) and (\ref{nnb}), we therefore established the estimate
\begin{align*}
\lambda(T) \lesssim  \lambda(0)+\sum_{i=1,2} N_k(\Delta\Phi_{i,in})+P(\lambda(T)),
\end{align*}
and the conclusion follows then directly from Lemma~\ref{lem:boot} of Appendix B.
\medskip

 $\qedhere$
\end{proof}

\section{Proof of the existence theorem}\label{sec3}

\medskip


In this section, we propose the

 \begin{proof}[Proof of Theorem \ref{existence}]

\medskip
We introduce 
the following regularized version of system (\ref{eq:syst_gen}): For, $\eta, \delta>0$,
we first smooth out the initial data into nonnegative functions $u_{\textnormal{in},\eta}$ and $v_{\textnormal{in},\eta}$
so that, for all $p\in[1,\infty]$, we have $\|u_{\textnormal{in},\eta}\|_p\leq \|\uin\|_p$ and $\|v_{\textnormal{in},\eta}\|_p\leq \|\vin\|_p$,
  and consider (for $\star$ convolution defined on $\T^d$)
\begin{equation}
  \label{eq:syst_gen_reg}
  \left\{
    \begin{lgathered}
\partial_t u-\Delta\bigg[(d_1+ \rho_\eta \star [p_\delta(u,v)] )u \bigg]=0,\\
      \partial_t v-\Delta\bigg[(d_2+ \rho_\eta \star [q_\delta(u,v)] )v \bigg]=0,                                                                                                                                              \end{lgathered}
  \right.
\end{equation}
    where $\rho_\eta$ is a smooth nonnegative kernel of mass $1$, and 
$p_\delta(u,v) :=p(\min(u,\delta), \min(v,\delta))$, $q_\delta(u,v) :=q(\min(u,\delta), \min(v,\delta))$.
 Letting $a_{\delta} := \|\Delta\rho_\eta\|_\infty \max(p(\delta,\delta), q(\delta, \delta))$, we define the (integrable on $\R_+$)  weight $\gamma_\delta(t) := e^{-2a_\delta t}$, and plan to set up a fixed point procedure on the space $\E:=\L^1_{\gamma_\delta}(\R_+;\L^1(\T^d))$. 
\medskip

More precisely, for $(u,v)\in \E\times\E$, we consider $\widetilde{u},\widetilde{v}$ the two solutions of the Kolmogorov equations
\begin{align*}                                                                                                                                                                                                         \partial_t \widetilde{u}-\Delta\bigg[(d_1+ \rho_\eta \star [p_\delta(u^+,v^+)] )\widetilde{u} \bigg]&=0,\\
      \partial_t \widetilde{v}-\Delta\bigg[(d_2+ \rho_\eta \star [q_\delta(u^+,v^+)] )\widetilde{v} \bigg]&=0,                                                                                                                                          \end{align*}
    initialized by $u_{in,\eta}$ and $v_{in,\eta}$ (and where we use the notation $w^+ := \max(w,0)$). 
\par 
To see that both $\widetilde{u}$ and $\widetilde{v}$ are well-defined and nonnegative, we use the dual setting of Lemma~\ref{lem:dua} of Appendix A: here $\mu_1(u,v):=d_1+\rho_\eta\star [p_\delta(u^+,v^+)]$ and $\mu_2(u,v):=d_2+\rho_\eta\star [q_\delta(u^+,v^+)]$ are both bounded and possess a strictly positive lower bound $\min(d_1,d_2)$.
\par
 Even more : thanks to the convolution and truncation operator, we also know that
 $\mu_k(u,v)\in\L^\infty(\R_+;\textnormal{W}^{2,\infty}(\T^d))$ for $k=1,2$, with $\|\Delta \mu_k(u,v)\|_{\L^\infty(\R_+;\L^\infty(\T^d))} \leq a_\delta$. We thus infer from estimate \eqref{ineq:dua:bis} of Lemma~\ref{lem:dua} of Appendix A that for every $(u,v)\in E$, the corresponding solutions $\widetilde{u},\widetilde{v}$ satisfy for $t\in\R_+$ the bound:
    \begin{align}\label{ineq:dua:fix}
\|(\widetilde{u},\widetilde{v})(t, \cdot)\|_2^2 + \int_0^t\int_{\T^d} d_1 |\nabla \widetilde{u}|^2 + \int_0^t\int_{\T^d} d_2 |\nabla \widetilde{v}|^2 \leq e^{a_\delta t}\, \|(\uin,\vin)\|_{\L^2(\T^d)}^2 .
    \end{align}
    In particular $(\widetilde{u},\widetilde{v})$ belongs to our weighted space $\E\times \E$, and we can serenely question the existence of a fixed point for the map $\Theta:\E^2\ni (u,v)\mapsto (\widetilde{u},\widetilde{v}) \in \E^2$.
\medskip

 One can readily check that $\Theta(\E\times \E)$ is relatively compact in $\E\times\E$. Indeed, thanks to \eqref{ineq:dua:fix}, we have a uniform control on the spatial gradients, and (thanks to the equations) we also have a control for the time derivatives in (at least) $\L^\infty(\R_+;\H^{-2}(\T^d))$. This is sufficient to invoke the Aubin-Lions lemma and recover a.e. convergence. Convergence in the $\E\times\E$ topology is then obtained thanks to the bound in $\L^\infty(\R_+;\L^2(\T^d))$ obtained through \eqref{ineq:dua:fix} and Vitali's lemma (the behavior for large times being handled thanks to the chosen decaying weight).

    \medskip

    Now that relative compactness of $\Theta(\E\times\E)$ has been established, we recall (see Lemma~\ref{lem:dua} of Appendix A) that the equations defining $\widetilde{u}$ and $\widetilde{v}$ are uniquely solvable in $\Ll^2(\R_+;\L^2(\T^d))$. In particular, if $(u_n,v_n)_n$ converges in $\E\times\E$ towards $(u,v)$, the sequence $(\Theta(u_n,v_n))_n$ is bounded in $\Ll^2(\R_+;\L^2(\T^d))$,  so that it has a unique possible cluster point in $\E\times\E$, which has to be $(\widetilde{u},\widetilde{v})$: continuity of $\Theta$ is established and Schauder's fixed point Theorem ensures the existence of a solution $(u,v)\in\E^2$ to \eqref{eq:syst_gen_reg}. 
Note finally that point 3. of Lemma~\ref{lem:dua} of Appendix A ensures that this fixed point is nonnegative, so that, in \eqref{eq:syst_gen_reg}, $p_\delta(u^+,v^+)$ and $q_\delta(u^+,v^+)$ can respectively be replaced by $p_\delta(u,v)$ and $q_\delta(u,v)$.

    \medskip
    
    Up to now (in the proof), $\delta$ was any strictly positive real number. We now take for $\delta$ the strictly positive real number obtained in the statement of the theorem (and we also restrict ourselves to the initial data
which are considered in the theorem). 
Then, we show that the solution $(u,v)$ of system (\ref{eq:syst_gen_reg}) obtained by the fixed point process actually solves a system in which the truncations are removed, that is
 \begin{equation}
  \label{eq:syst_gen_reg_bis}
  \left\{
    \begin{lgathered}
\partial_t u-\Delta\big[(d_1+ \rho_\eta \star p(u,v))u]=0,\\
      \partial_t v-\Delta\big[(d_2+ \rho_\eta \star q(u,v))v]=0.                                                                                                                                     \end{lgathered}
  \right.
\end{equation}
In order to do so, we need to show the bound $\sup_{t\geq 0}\|(u,v)\|_{\L^\infty([0,t]\times\T^d)} < \delta$. 
Let's first note that at this level, both $u$ and $v$ are smooth. Indeed, for fixed $\eta$, the mobilities $\mu_1 = d_1+ \rho_\eta \star p_\delta(u,v)$ and $\mu_2(u,v) = d_2 + \rho_\eta\star q_\delta(u,v)$ both belong to $\cap_{k\in\N} \L^\infty(\R_+;\W^{k,\infty}(\T^d))$. This allows, for instance for $u$, to expand the diffusion operator into a classical one : $\partial_t u - \Delta(\mu_1 u) = \partial_t u - \nabla \cdot (\mu_1 \nabla u) - \nabla \mu_1 \cdot \nabla u - u \Delta \mu_1$, and infer from classical parabolic theory (initial data are also smooth) the smoothness of $u$ (and similarly, of $v$), with respect to both variables. Thanks to this smooth setting, the map $t\mapsto \|(u,v)\|_{\L^\infty([0,t]\times\T^d)}$ is continuous and the assumption on the initial data ensures that at time $t=0$, this function is strictly less than $\delta$.
 By continuity, 
the set  $\left\{t>0\,:\, \|(u,v)\|_{\L^\infty([0,t]\times\T^d)} < \delta \right\}$ is therefore not empty. 
Assuming that its supremum is a finite real number $T^\star$, we note that, for any $T<T^\star$, the $\delta$-truncation in our approximation is  useless so that, on $[0,T]\times\T^d$, our solution $(u,v)$ is indeed solution to \eqref{eq:syst_gen_reg_bis}.
Now, let's check that the extra convolutions do not interfere with the computations that we performed in the proof of Proposition~\ref{prop:uni}.
\medskip

We now define $\Phi_1:=(d_1+[\rho_\eta\star p(u,v)])u$ and $\Phi_2:=(d_2+[\rho_\eta\star q(u,v)])v$, and compute
    \begin{align*}
      \partial_t \Phi_1 &= (d_1+[\rho_\eta\star p(u,v)]) \partial_t u + u \rho_\eta \star (\partial_1 p(u,v)\partial_t u) +u \rho_\eta \star (\partial_2 p(u,v)\partial_t v),\\
      \partial_t \Phi_2 &= (d_2+[\rho_\eta\star q(u,v)]) \partial_t v + v \rho_\eta \star (\partial_1 q(u,v)\partial_t u) +v \rho_\eta \star (\partial_2 q(u,v)\partial_t v).
      \end{align*}
      We also define (for $k> 1+ \frac{d}2$ given in the statement of the theorem, and $T<T^\star$)
      \begin{align*}
  \lambda(T) := \|\Delta\Phi_1\|_{\L^k(Q_T)} + \|\Delta\Phi_2\|_{\L^k(Q_T)} + \|u\|_{\L^\infty(Q_T)} +\|v\|_{\L^\infty(Q_T)}.
      \end{align*}
Since all quantities are smooth, this function is continuous.
      We infer from the previous equalities the following estimate:
    \begin{align}\label{me}
\|\partial_t \Phi_i  - d_i \Delta \Phi_i\|_{\L^k(Q_T)} \leq P(\lambda(T)),
      \end{align}        
    for some polynomial $P\in\R_+[X]$ having a double $0$ at the origin and depending only on universal constants and the data of our problem. This last estimate is the analog of \eqref{ineq:phi} in the proof of Proposition~\ref{prop:uni}. The remaining part of the proof
 is identical to the one given in Proposition~\ref{prop:uni}.
 Lastly we use the usual continuity argument to conclude that $T^\star=+\infty$.
      
      \medskip

      Now that we obtained a solution $(u_\eta,v_\eta)$ to our regularized system \eqref{eq:syst_gen_reg_bis} which remains uniformly small globally in time, it remains to pass to the limit and get rid of the regularization parameter $\eta>0$. 
The solutions of our approximated system \eqref{eq:syst_gen_reg_bis} are regular enough to justify the usual energy estimate obtained by multiplying the first equation by $u_\eta$, and the second by $v_\eta$. For the first equation, we get after usual integration by parts the estimate
      \begin{multline*}
\frac 12 \frac{\dd}{\dd t } \|u_\eta(t)\|_2^2 + \int_{\T^d} (d_1+\rho_\eta \star p(u_\eta,v_\eta))|\nabla u_\eta|^2 \\+ \int_{\T^d} u_\eta \nabla u_\eta \cdot \rho_\eta \star (\partial_1 p(u_\eta,v_\eta)\nabla u_\eta) \\+  \int_{\T^d} u_\eta\nabla u_\eta \cdot \rho_\eta \star (\partial_2 p(u_\eta,v_\eta)\nabla v_\eta)= 0.
\end{multline*}
In particular, since $p\in\R_+[X]$, we infer from Cauchy-Schwarz's inequality that
      \begin{multline*}
        \frac12 \frac{\dd}{\dd t} \|u_\eta(t)\|_2^2 + \frac{d_1}{2}\|\nabla u_\eta(t)\|_2^2  \\+ \|\nabla u_\eta(t)\|_2^2\left(\frac{d_1}{2}-\|u_\eta\|_\infty\partial_1 p(\|u_\eta\|_\infty,\|v_\eta\|_\infty)\right)\\- \|\nabla u_\eta(t)\|_{2} \|\nabla v_\eta(t)\|_{2} \|u_\eta\|_\infty  \partial_2 p(\|u_\eta\|_\infty,\|v_\eta\|_\infty)\leq 0. 
\end{multline*}

Summing with the analogous estimate for the equation satisfied by $v_\eta$, we recover the estimate
\begin{multline*}
  \frac12 \frac{\dd}{\dd t}\Big( \|u_\eta(t)\|_2^2+\|v_\eta(t)\|_2^2\Big) + \frac{d_1}{2}\|\nabla u_\eta(t)\|_2^2 + \frac{d_2}{2}\|\nabla v_\eta(t)\|_2^2 \\+ Q(\|u_\eta\|_\infty,\|v_\eta\|_\infty)(\|\nabla u_\eta(t)\|_2,\|\nabla v_\eta(t)\|_2) \leq 0,
\end{multline*}
where $Q(a,b)$ is the bilinear form with matrix
\begin{align*}
  \begin{pmatrix}
   \frac12 d_1-|a\partial_1 p(a,b)| & -|a\partial_2 p(a,b)|\\
 -|b\partial_1 q(a,b)| &     \frac12 d_2-|b\partial_2 q(a,b)| 
   \end{pmatrix}.
\end{align*}
Since $(a,b)\mapsto Q(a,b)$ is continuous and diagonal positive at $(0,0)$, we infer the existence of $\delta_A>0$ such that $|a|\vee |b| <\delta_A \Longrightarrow Q^{sym}(a,b)>0$ in the sense of symmetric matrices. In particular, since $|u_\eta|\vee|v_\eta|<\delta$, if we assume from the beginning that $\delta\leq\delta_A$,
 we have that $Q^{sym}(\|u_\eta\|_\infty,\|v_\eta\|_\infty)\geq 0$. At the end of the day, we get the estimate
\begin{align}\label{ineq:nrj:approx}
  \frac12 \frac{\dd}{\dd t}\Big( \|u_\eta(t)\|_2^2+\|v_\eta(t)\|_2^2\Big) + \frac{d_1}{2}\|\nabla u_\eta(t)\|_2^2 + \frac{d_2}{2}\|\nabla v_\eta(t)\|_2^2 \leq 0.
\end{align}
Note that in the above argument, the property of the map $A: (u,v) \mapsto ((d_1+p(u,v))\,u, (d_2+q(u,v))\,v)$ which is key is the fact that $D(A)^{\rm{sym}}(0,0) >0$ in the sense of symmetric matrices.
\par 
This gives enough control on the gradients to invoke Aubin-Lions' lemma and recover a.e. convergence for $(u_\eta,v_\eta)_\eta$. Since we also have a uniform bound in $\L^\infty(\R_+;\L^\infty(\T^d))$, this leads eventually to a global solution as announced.
\medskip

 Note that the stated large time behavior in \eqref{ineq:large} can also be recovered by \eqref{ineq:nrj:approx}, just as in the proof of \cite[Theorem 2.1]{BLMP}. Indeed, from \eqref{ineq:nrj:approx} and the Poincaré-Wirtinger inequality, we infer
\begin{align*}
  \frac12 \frac{\dd}{\dd t} \Big( \|u_\eta(t)\|_2^2+\|v_\eta(t)\|_2^2\Big) \lesssim - \|u_\eta(t)-\langle u_\eta(t)\rangle_{\T^d}\|_2^2 - \|v_\eta(t)-\langle v_\eta(t)\rangle_{\T^d}\|_2^2,
\end{align*}
where $\langle \cdot\rangle_{\T^d}$ denotes here the average on $\T^d$. The system being conservative, and using $\|u_\eta(t)-\langle u_\eta(0)\rangle_{\T^d}\|_2^2 = \|u_\eta(t)\|_2^2 - \langle u_\eta(0)\rangle_{\T^d}^2$ (and the same identity for $v_\eta$), we have $\Theta_\eta'(t) \lesssim - \Theta_\eta(t)$ (with a constant independent of $\eta$), where
 \[\Theta_\eta(t) := \|u_\eta(t)-\langle u_\eta(0)\rangle_{\T^d}\|_2^2+\|v_\eta(t)-\langle v_\eta(0)\rangle_{\T^d}\|_2^2.\]
The (uniform in $\eta$) exponential decay of $\Theta_\eta$ is then kept as $\eta\rightarrow 0$.
 $\qedhere$    
  \end{proof}

\section{Proof of the stability and uniqueness theorem}\label{subsec:locstab}

In this section, we prove Theorem \ref{uniqueness}.
This result is reminiscent of the stability estimate derived in \cite[Theorem 1]{bmmh} and is more precisely a consequence of Lemma~\ref{lem:dua:bis}. 
This stability estimate applies for bounded solutions of the system if one of them is small enough. It is a consequence of the more precise Proposition \ref{pr2} below. In order to write it down, we 
 introduce for $R>0$ the cone $\C^+_R$ containing all nonnegative elements of the $\L^\infty(\R_+;\T^d)^2$ ball of size $R$,  and $\L_R$ a common Lipschitz constant for both $p$ and $q$ on the compact set $\{(u,v)\in\R^2\,:\, \max(|u|,|v|)\leq R\}$.

  \begin{prop} \label{pr2}
Let $d\in \N^*$,  $d_1,d_2>0$, and $p,q$ be two elements of $\R_+[X,Y]$ vanishing at $(0,0)$. 
For any $R, \delta>0$ 
 such that \begin{align}\label{ineq:delta}
(\delta \L_R)^2\left[\frac{1}{d_1}+\frac{1}{d_2}\right] < \max(d_1 , d_2) ,
    \end{align}
 and any pair of $\mathscr{C}^0(\R_+;\H^{-1}(\T^d))$ of dual solutions $(U_1,U_2)\in\C_R^+\times\C_\delta^+$ (with $U_k=(u_k,v_k)$) to system \eqref{eq:syst_gen} in the sense of Definition~\ref{def:dua}, associated to initial data $(U_{1,in},U_{2,in}) \in \L^{\infty}(\T^d) \cap \H^{-1}(\T^d)$, one has
    \begin{multline*}
\|U_1(t)-U_2(t)\|_{\H^{-1}(\T^d)}^2 + \C\, \int_0^t \|U_1(s)-U_2(s)\|_{\L^2(\T^d)}^2\,\dd s  \\\leq \|U_{1}(0) -U_{2}(0)\|_{\H^{-1}(\T^d)}^2 +\left(\int_{\T^d} [u_{1}(0)-u_{2}(0)] \right)^2  \,T\, (d_1+p(R,R))\\  + \left(\int_{\T^d} [v_{1}(0)-v_{2}(0)] \right)^2 \, T\, (d_2+q(R,R)),
\end{multline*}
where the constant $\C$ depends only on $\delta, R,d_1,d_2$, $p$ and $q$.
  \end{prop}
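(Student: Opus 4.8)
\textbf{Proof plan for Proposition~\ref{pr2}.}

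The plan is to subtract the two systems and apply the duality estimate \eqref{ineq:dua:bisbis} componentwise. Set $w := u_1 - u_2$ and $\omega := v_1 - v_2$. For the first component, write $\partial_t u_k - \Delta[(d_1 + p(U_k))u_k] = 0$, so that subtracting gives $\partial_t w - \Delta(\mu\, w) = \Delta f$ where I choose $\mu := d_1 + p(U_1)$ (the mobility of the \emph{first}, large solution $U_1$) and $f := [p(U_1) - p(U_2)]\,u_2 = -[p(U_2)-p(U_1)]u_2$; note $\mu \geq d_1 > 0$ and $\mu$ is bounded since $U_1 \in \C_R^+$. Crucially, $f$ is controlled pointwise: $|f| \leq \L_R\,|U_1 - U_2|\,|u_2| \leq \delta\,\L_R\,(|w| + |\omega|)$ using the Lipschitz bound for $p$ on the ball of radius $R$ and the smallness $\|u_2\|_\infty \leq \delta$. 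The initial data of $w$ is $u_1(0) - u_2(0) \in \H^{-1} \cap \L^\infty$, so Lemma~\ref{lem:dua:bis} applies (after checking the $\L^2$-in-time regularity, which follows from $U_1, U_2 \in \L^\infty$). Symmetrically, for the second component take $\mu' := d_2 + q(U_1)$ and $f' := [q(U_1) - q(U_2)]v_2$ with $|f'| \leq \delta\,\L_R\,(|w| + |\omega|)$.

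Next I would invoke \eqref{ineq:dua:bisbis} for each component and add. The left-hand side produces $\|w(t)\|_{\H^{-1}}^2 + \|\omega(t)\|_{\H^{-1}}^2 + \int_0^t \int (\mu\, w^2 + \mu'\,\omega^2)$, and since $\mu \geq d_1$, $\mu' \geq d_2$, the dissipation term is bounded below by $\min(d_1,d_2)\int_0^t \int (w^2 + \omega^2)$. The right-hand side contributes: the initial $\H^{-1}$ norms; the terms $(\int w(0))^2 \int_0^t\int \mu$ and $(\int \omega(0))^2 \int_0^t \int \mu'$, which are bounded by $(\int w(0))^2\, T\,(d_1 + p(R,R))$ and the analogous $q$-term, since $\mu \leq d_1 + p(R,R)$ on $\C_R^+$; and the bad terms $\int_0^t \int f^2/\mu + \int_0^t \int (f')^2/\mu'$. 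Using $\mu \geq d_1$, $\mu' \geq d_2$ and the pointwise bounds on $f, f'$, together with $(|w|+|\omega|)^2 \leq 2(w^2 + \omega^2)$, the bad terms are at most $2(\delta\L_R)^2\left(\tfrac1{d_1} + \tfrac1{d_2}\right)\int_0^t \int (w^2 + \omega^2)$ (possibly with a harmless constant from summing the two components, which one absorbs by being slightly careful with the Cauchy–Schwarz split — writing $f^2 \leq (\delta\L_R)^2(|w|+|\omega|)^2$ and noting the sum over both components yields a factor that condition \eqref{ineq:delta} is designed to beat; if the crude bound loses a factor $2$ one simply strengthens \eqref{ineq:delta} accordingly or tracks constants more carefully).

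The final step is absorption: condition \eqref{ineq:delta} guarantees $(\delta\L_R)^2\left(\tfrac1{d_1}+\tfrac1{d_2}\right) < \max(d_1,d_2)$, so a fortiori (after handling the summation constant) the coefficient of $\int_0^t\int(w^2+\omega^2)$ coming from the bad terms is strictly smaller than $\min(d_1,d_2)$ — here one should double-check whether the natural constant is $\min$ or $\max$; since $\mu \geq d_1$ bounds only the $w$-dissipation and $\mu' \geq d_2$ only the $\omega$-dissipation, while each bad term involves both $w$ and $\omega$, the honest comparison pits $(\delta\L_R)^2(\tfrac1{d_1}+\tfrac1{d_2})$ against $\min(d_1,d_2)$, and to reach the stated condition with $\max$ one presumably exploits that $f$ (resp. $f'$) only needs $\|u_2\|_\infty$ (resp. $\|v_2\|_\infty$) and regroups the dissipation more cleverly, e.g. bounding $\int \mu w^2 \geq d_1 \int w^2$ and pairing it against the $w$-contributions of \emph{both} bad terms. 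Subtracting the absorbed dissipation from both sides leaves exactly the claimed inequality with $\C := \min(d_1,d_2) - (\delta\L_R)^2(\tfrac1{d_1}+\tfrac1{d_2}) > 0$ (or the appropriately regrouped positive constant), which depends only on $\delta, R, d_1, d_2, p, q$. The main obstacle is precisely this bookkeeping of constants in the absorption step — making sure the pointwise estimate on $f, f'$ is exploited sharply enough (using that each source term is multiplied only by the small solution's density, and splitting the $|w|+|\omega|$ contributions against the correct dissipation term) so that the hypothesis \eqref{ineq:delta} with $\max(d_1,d_2)$ on the right suffices; everything else is a direct application of Lemma~\ref{lem:dua:bis}.
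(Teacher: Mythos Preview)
Your plan is essentially identical to the paper's proof: subtract the two systems, freeze the mobility at $U_1$, push the remainder into a source $\Delta f$ with $f=[p(U_1)-p(U_2)]u_2$, apply Lemma~\ref{lem:dua:bis} componentwise, and absorb. The paper avoids your extraneous factor $2$ by writing the Lipschitz bound directly as $|p(U_1)-p(U_2)|^2\le \L_R^2(z_u^2+z_v^2)$ (Euclidean norm), yielding $\int f^2/\mu_p\le (\L_R^2\delta^2/d_1)\int(z_u^2+z_v^2)$ without any Cauchy--Schwarz loss.

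Your hesitation about $\min$ versus $\max$ is well founded. The paper's own absorption step produces
\[
\C_\delta=\min\!\left(d_1-\tfrac{\L_R^2\delta^2}{d_1}-\tfrac{\L_R^2\delta^2}{d_2},\ d_2-\tfrac{\L_R^2\delta^2}{d_1}-\tfrac{\L_R^2\delta^2}{d_2}\right)=\min(d_1,d_2)-(\delta\L_R)^2\!\left(\tfrac1{d_1}+\tfrac1{d_2}\right),
\]
and positivity of this constant requires $(\delta\L_R)^2(\tfrac1{d_1}+\tfrac1{d_2})<\min(d_1,d_2)$, not $\max(d_1,d_2)$ as written in \eqref{ineq:delta}. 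No ``clever regrouping'' is performed in the paper to reach $\max$; this appears to be a typo in the statement, and your instinct that the honest comparison is against $\min(d_1,d_2)$ is correct.
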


  \begin{proof}
    The definition of $L_R$ ensures that 
 for any pair $(u_k,v_k)$, $k=1,2$,  one has  
    \[ \max\Big( |p(u_1,v_1)-p(u_2,v_2)|^2 ,  |q(u_1,v_1)-q(u_2,v_2)|^2 \Big) \leq \L_R^2\, \big( |u_1-u_2|^2+|v_1-v_2|^2\big)\]
on the compact set $\{(u,v)\in\R^2\,:\, \max(|u|,|v|)\leq R\}$.
\medskip
    
Let's write 
 $z_u:=u_1-u_2$ and $z_v:=v_1-v_2$. One has
    \begin{align*}
      \partial_t z_u - \Delta\big[(d_1+p(u_1,v_1))z_u\big] &= \Delta\big[(p(u_1,v_1)-p(u_2,v_2))u_2\big],\\
      \partial_t z_v - \Delta\big[(d_2+q(u_1,v_1))z_v\big] &= \Delta\big[(q(u_1,v_1)-q(u_2,v_2))v_2\big].
    \end{align*}
    Now $u_1$ and $v_1$ are nonnegative and bounded, so that $\mu_p:=d_1+p(u_1,v_1)$ and $\mu_q:=d_2+q(u_1,v_1)$ are both bounded from above and below by positive constants. We can therefore use Lemma~\ref{lem:dua:bis}, firstly for $z_u$, to get the estimate 
    \begin{multline*}
 \|z_u(t, \cdot)\|_{\H^{-1}(\T^d)}^2 + \int_0^t \int_{\T^d} \mu_p \, z_u^2\\ \leq \|z_u(0)\|_{\H^{-1}(\T^d)}^2 + \left(\int_{\T^d} z_u(0)\right)^2 \int_0^t \int_{\T^d} \mu_p + \int_{0}^t \int_{\T^d} \frac{\displaystyle\Big[(p(u_1,v_1)-p(u_2,v_2))u_2\Big]^2}{\mu_p}.
\end{multline*}
Using the definition of the common Lipschitz constant $\L_R$ and the fact that $p\in\R_+[X]$, with $U_1\in\C_R^+$ and $|u_2| \le \delta$, we have for $t\leq T$,
    \begin{multline*}
 \|z_u(t, \cdot)\|_{\H^{-1}(\T^d)}^2 + \int_0^t \int_{\T^d} d_1 \, z_u^2\\ \leq \|z_u(0)\|_{\H^{-1}(\T^d)}^2 + \left(\int_{\T^d} z_u(0)\right)^2  T (d_1+p(R,R)) + \frac{\L_R^2 \delta^2}{d_1} \int_{0}^t \int_{\T^d}(z_u^2+z_v^2).
\end{multline*}
A similar computation leads to an analogous estimate for $z_v$:
    \begin{multline*}
 \|z_v(t, \cdot)\|_{\H^{-1}(\T^d)}^2 + \int_0^t \int_{\T^d} d_2 \, z_v^2\\ \leq \|z_v(0)\|_{\H^{-1}(\T^d)}^2 + \left(\int_{\T^d} z_v(0)\right)^2  T (d_2+q(R,R)) + \frac{\L_R^2 \delta^2}{d_2} \int_{0}^t \int_{\T^d}(z_u^2+z_v^2).
\end{multline*}
The smallness assumption \eqref{ineq:delta} on $\delta$ ensures that
\begin{align*}
\C_\delta :=  \inf\bigg[ \left(d_1-\frac{\L_R^2\delta^2}{d_1}-\frac{\L^2_R\delta^2}{d_2}\right) , \left(d_2-\frac{\L_R^2\delta^2}{d_1}-\frac{\L^2_R\delta^2}{d_2}\right)\bigg]  >0. 
\end{align*}
Summing the estimates on $z_u(t, \cdot)$ and $z_v(t, \cdot)$, we infer 
\begin{multline*}
\|(z_u,z_v)(t, \cdot)\|_{\H^{-1}(\T^d)}^2 + \C_\delta \int_0^t \int_{\T^d} (z_u^2+z_v^2)  \\\leq \|(z_u(0),z_v(0))\|_{\H^{-1}(\T^d)}^2 +\left(\int_{\T^d} z_u(0)\right)^2  T (d_1+p(R,R))\\  + \left(\int_{\T^d} z_v(0)\right)^2  T (d_2+q(R,R)),
\end{multline*}
which concludes the proof. $\qedhere$
\end{proof}

 \section{Some considerations on the case when the initial data are not small}\label{sec-cc}

In this short section, we try to explain some of the difficulties for showing existence of weak solutions to general systems of the form (\ref{eq:syst_gen}) (when $d_1,d_2>0$ are two constants, $p,q$ are  two elements of $\R_+[X,Y]$  vanishing at $(0,0)$, and when periodic boundary conditions are imposed).
\medskip

Let us first notice that when no Lyapunov functional is known for the system (apart from obvious ones, such as the $\L^1$ norm of the two equations), no source of strong compactness for sequences of (approximated) solutions seems to appear (at least without further assumptions on $p,q$), since the usual \emph{a priori} estimates performed on classical (non cross) diffusion systems do not seem to hold. 
\medskip

Let us focus on a more favorable case, that is when some Lyapunov functional (also called energy in this context) is known to exist. When it is convex, we recall that the existence of weak solutions can often be obtained \cite{CJ,DLM,DLMT,LM}. However, there are cases when an energy exists,
 but is not convex. Such a situation has indeed been pointed out in \cite{BLMP}, where the following example is presented:
\begin{equation}
  \label{eq:syst_nonconv}
  \left\{
    \begin{lgathered}
\partial_t u - \Delta\big[(1 + v^2)u\big] = 0,\\
\partial_t v - \Delta\big[(1 + u^2)v\big] = 0.
    \end{lgathered}
  \right.
\end{equation}
One checks indeed that the space integral of $(1+u^2)(1+v^2)$ (we consider here periodic boundary conditions) decays along time. 
More precisely, one can check that for any smooth solution, the following bound holds:

\begin{align}
\label{ent:nonconv}\frac12\frac{\dd}{\dd t} \int_{\T^d} (1+u^2)(1+v^2) = -\int_{\T^d} \Big[|\nabla(u(1+v^2))|^2 + |\nabla(v(1+u^2))|^2\Big] \leq 0.
\end{align}

This dissipation of the energy controls in this situation only \emph{a part} of the gradients of the solution. To highlight this lack of control of the whole gradients, 
we present a short proposition (in which we omit the time variable) which shows the type of pathologies which could prevent strong compactness.



\begin{prop}\label{prop:contrex}
  There exist two sequences of functions $(u_n)_n$ and $(v_n)_n$ taking positive values on $\T^d$ such that :
  \begin{itemize}
  \item[$\bullet$] $(u_n)_n$ and $(v_n)_n$ are bounded in $\L^\infty(\T^d)$ ;
  \item[$\bullet$] $(u_n(1+v_n^2))_n$ and $(v_n(1+u_n^2))_n$ are bounded in $\H^1(\T^d)$ ;
    \item[$\bullet$] neither $(u_n)_n$ nor $(v_n)_n$ is relatively compact in $\L^2(\T^d)$.
  \end{itemize}
\end{prop}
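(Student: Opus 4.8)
The idea is to exploit the non‑injectivity of the map $\Psi\colon(u,v)\mapsto\bigl((1+v^{2})u,\,(1+u^{2})v\bigr)$: if one can find two distinct points of $(0,\infty)^2$ with the same image under $\Psi$, then rapidly oscillating sequences switching between these two points will keep the nonlinear combinations $u_n(1+v_n^2)$ and $v_n(1+u_n^2)$ \emph{constant} (hence trivially bounded in $\H^1$), while $u_n$ and $v_n$ themselves oscillate and thus fail to be relatively compact in $\L^2$.

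\textbf{The algebraic step.} Note that $(1+v^{2})u=(1+u^{2})v$ is equivalent to $(u-v)(1-uv)=0$, so along the branch $uv=1$ the two components of $\Psi(u,v)$ coincide. Pick $\alpha=2$, $\beta=\tfrac12$ (so $\alpha\beta=1$, $\alpha\neq\beta$), and set $P_1:=(\alpha,\beta)=(2,\tfrac12)$, $P_2:=(\beta,\alpha)=(\tfrac12,2)$. A direct check gives, at $P_1$, $(1+v^{2})u=2(1+\tfrac14)=\tfrac52$ and $(1+u^{2})v=\tfrac12(1+4)=\tfrac52$, and likewise at $P_2$ both quantities equal $\tfrac52$; thus $\Psi(P_1)=\Psi(P_2)=(\tfrac52,\tfrac52)$.

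\textbf{The construction.} For $n\in\N^{*}$ let $\chi_n\colon\T^d\to\{-1,+1\}$ be the function depending only on the first coordinate through the parity of $\lfloor n x_1\rfloor$ (an oscillation at scale $1/n$), and define $u_n:=2$ where $\chi_n=+1$, $u_n:=\tfrac12$ where $\chi_n=-1$, and $v_n:=\tfrac12$ where $\chi_n=+1$, $v_n:=2$ where $\chi_n=-1$. Then $u_n,v_n$ take positive values, are bounded by $2$ in $\L^\infty(\T^d)$, and by the computation above $u_n(1+v_n^2)\equiv\tfrac52$ and $v_n(1+u_n^2)\equiv\tfrac52$ on $\T^d$; being constant, these are in $\H^1(\T^d)$ with norm independent of $n$. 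It remains to show $(u_n)_n$ is not relatively compact in $\L^2(\T^d)$ (the argument for $(v_n)_n$ is identical since it takes the same two values). Writing $u_n=\tfrac54+\tfrac34\chi_n$, one has $\chi_n\rightharpoonup 0$ weakly in $\L^2(\T^d)$ (a standard Riemann‑sum estimate against smooth test functions, extended by density using $\|\chi_n\|_\infty=1$), hence $u_n\rightharpoonup\tfrac54$. However $\|u_n\|_{\L^2(\T^d)}^2=4\,\lvert\{\chi_n=+1\}\rvert+\tfrac14\,\lvert\{\chi_n=-1\}\rvert\to\tfrac12\bigl(4+\tfrac14\bigr)=\tfrac{17}{8}$, whereas $\|\tfrac54\|_{\L^2(\T^d)}^2=\tfrac{25}{16}\neq\tfrac{17}{8}$. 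If some subsequence of $(u_n)_n$ converged in $\L^2(\T^d)$, its limit would have to be the weak limit $\tfrac54$ and its norms would converge to $\|\tfrac54\|_{\L^2}$, a contradiction. This proves the proposition.

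\textbf{Main point.} There is essentially no analytic difficulty here: the only thing to spot is the branch $uv=1$ on which $\Psi$ degenerates, everything else being the classical fact that nontrivial oscillations converge weakly but not strongly. One minor piece of bookkeeping is that $\lvert\{\chi_n=\pm1\}\rvert\to\tfrac12$ as $n\to\infty$, which is immediate for this one–dimensional oscillation pattern.
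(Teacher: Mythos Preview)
Your proof is correct and follows the same overall strategy as the paper: find two distinct points $P_1,P_2\in(0,\infty)^2$ with the same image under $\Psi(u,v)=\bigl((1+v^2)u,(1+u^2)v\bigr)$, then let $(u_n,v_n)$ oscillate between them so that the nonlinear combinations are constant while $u_n,v_n$ fail to be strongly compact. The only difference is in how the two points are found. The paper introduces the quintic $P(X)=(X^2-3X+1)(X^3+X-3)$, picks two positive roots $r_1<r_2$, sets $v_n\in\{r_1,r_2\}$ and $u_n=3/(1+v_n^2)$, and checks that $P(v_n)=0$ forces $(1+u_n^2)v_n=3$. Your observation that $(1+v^2)u=(1+u^2)v \iff (u-v)(1-uv)=0$ is more transparent: it shows directly that any pair $(\alpha,1/\alpha)$ and $(1/\alpha,\alpha)$ will do, and choosing $\alpha=2$ gives explicit numbers. (In fact the paper's roots satisfy $r_1r_2=1$, so its points lie on the very branch you identified, just disguised behind the polynomial.) For non-compactness the paper uses that its Rademacher-type family is orthonormal, hence $\|v_n-v_p\|_2$ is constant and positive; your weak-vs-strong argument is equally standard. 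Both routes are short; yours has the advantage of making the underlying symmetry visible.
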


\begin{proof}
The polynomial 
\begin{align*}
  P(X) &:= (X^2-3X+1)(X^3+X-3) \\
  &=(X^2+1)^2X-3(X^2+1)^2+9X
\end{align*}
admits (at least) two distinct positive roots: $r_1<r_2$. Consider the function $\psi := \mathbf{1}_{(0,1/2)}-\mathbf{1}_{(1/2,1)}$. Its $1$-periodization $\Psi := \sum_{k\in\Z} \tau_k\psi$ is constant on each half-integer interval $(k,k+1)/2$ for $k\in\Z$, alternates values $1$ and $-1$ on those, so it has a vanishing integral on each integer interval $(k,k+1)$. Define for $n\in\N^\star$ the function $\Psi_n:x\mapsto \Psi(2^n x)$ which is now constant on each dyadic interval $(k,k+1)/2^{n+1}$ and alternates values $1$ and $-1$ on those. In particular, for $m<n$, $\Psi_n$ has a vanishing integral on each dyadic interval $(k,k+1)/2^{m+1}$. We claim that $(\Psi_n)_n$ (restricted to $(0,1)$) is an orthonormal family in $\L^2(0,1)$ : each function is certainly of norm $1$ (since its takes $\pm 1$ as value), and if $m<n$, we can (essentially) cover $(0,1)$ by a finite family $\mathscr{F}$ of dyadic intervals $(k,k+1)/2^{m+1}$, so that
\begin{align*}
\int_0^1 \Phi_n \Phi_m = \sum_{\textnormal{I}\in\mathscr{F}} \int_{\textnormal{I}} \Phi_n \Phi_m,
\end{align*}
and as noticed before, on each element of $\mathscr{F}$, the function $\Phi_m$ is constant (by definition),
 whereas $\Phi_n$ has a vanishing integral : we have $\Phi_n \perp \Phi_m$ in $\L^2(0,1)$.
 A similar construction can be done on $\T^d$, that we still denote $(h_n)_n$.
The sequence \[v_n:= r_2 \frac{1+h_n}{2} + r_1\frac{1-h_n}{2}\] takes therefore its values in $\{r_1,r_2\}$ (so that $P(v_n)=0$) and does not admit a subsequence converging almost everywhere. Indeed, for $n\neq p$, orthogonality leads to
\begin{align*}
\|v_n-v_p\|_2 = \frac{r_2-r_1}{2} \|h_n-h_p\|_2 = \frac{r_2-r_1}{\sqrt{2}},
  \end{align*}
  which refutes the $\L^2(\T^d)$ Cauchy criterion for (any subsequence of) $(v_n)_n$, whereas almost everywhere convergence would imply convergence in $\L^2(\T^d)$ by dominated convergence. Letting $u_n:=3/(1+v_n^2)$, the identity $P(v_n) =0$ proves that $(1+u_n^2)v_n =3$, so that the two sequences $(u_n(1+v_n^2))_n$ and $(v_n(1+u_n^2))_n$ are indeed bounded in $\H^1(\T^d)$ (because they are constant). $\qedhere$
\end{proof}

\medskip

The counterexample given in Proposition~\ref{prop:contrex} relies on functions $u_n$ and $v_n$ that are rather singular (staircase functions) and this construction cannot be reproduced in a smooth setting (because of the $\H^1(\T^d)$ bound). In the context of the parabolic PDEs that we focus on, it is natural to expect more regularity for $u_n$ and $v_n$. Here's another way to describe the situation at stake. If $\Phi_n:=u_n(1+v_n^2)$ and $\Psi_n := v_n(1+u_n^2)$, the assumptions of Proposition~\ref{prop:contrex} imply that both $(\Phi_n)_n$ and $(\Psi_n)_n$ are bounded in $\H^1(\T^d)$. A direct computation shows that the identities defining these sequences are equivalent to $v_n = (1+u_n^2)^{-1}\Psi_n$ and
\begin{align*}
u_n (1+u_n^2)^2+ u_n \Psi_n^2- (1+u_n^2)^2 \Phi_n = 0.
\end{align*}
The previous equality can be written as follows:
\begin{align*}
u_n^5 + \sum_{k=0}^4 a_{k,n} u_n^k =0,
\end{align*}
where $(a_{k,n})_n$ for $k=0,\dots,4$, are five sequences of functions that are all bounded in $\H^1(\T^d)$. This means that at each point $x$, $u_n(x)$ takes its values in the set of roots of a polynomial whose coefficients (as functions of $x$) are all bounded in $\H^1(\T^d)$. In a sense, the counterexample in Proposition~\ref{prop:contrex} reduces to jumps (on a vanishing scale) from one root to another one. If we add a regularity assumption on $u_n$, this type of counterexample is not reproducible, but other pathologies may appear, especially in the case when the polynomial has multiple roots. It is possible that for a low-degree constraint, the existence of explicit formulas allows for simplification in the analysis, but in the specific case of fifth degree polynomial that we consider here, we find this issue rather intriguing.

   \appendix

   \section{Duality estimates}

The forthcoming duality estimate is given within a functional setting allowing its rigorous use in the context of (very) weak solutions.  

\begin{lem}\label{lem:dua}
 Under the assumptions of Lemma~\ref{lem:dua:bis}, if $f=0$, $z_{in}\in\L^2(\T^d)$  and additionnaly $\mu\in\Ll^\infty(\R_+;\W^{2,\infty}(\T^d))$, then

 \begin{enumerate}
\item
$z\in\mathscr{C}^0(\R_+;\L^2(\T^d))$, and we have the extra estimate:
    \begin{multline}\label{ineq:dua:bis}
\|z(t, \cdot)\|_{\L^2(\T^d)}^2 +\int_0^t \int_{\T^d} \mu\, |\nabla z|^2 \\\leq \|z_{in}\|_{\L^2(\T^d)}^2\,\, \exp \left(\int_0^t \|(\Delta\mu)^+(s, \cdot)\|_{\L^\infty(\T^d)}\,\dd s \right).
    \end{multline}
 \item 
    If furthermore $z_{in}\in\H^1(\T^d)$, then $\partial_t z \in\Ll^2(\R_+;\L^2(\T^d))$.
 \item
 Finally, if $0\leq z_{in}\in\L^\infty(\T^d)$, then for any $t\geq 0$ and a.e. $(s,x) \in [0,t]\times\T^d$,
    \begin{align}\label{ineq:dua:max}
      0 \leq z(s,x) \leq \|z_{in}\|_{\L^\infty} \exp\left(\int_0^t \|(\Delta\mu)^+(\sigma)\|_{\L^\infty(\T^d)}\,\dd \sigma \right). 
      \end{align}
\end{enumerate}
  \end{lem}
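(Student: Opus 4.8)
The statement to prove is Lemma~\ref{lem:dua}, which upgrades the basic dual solution of Lemma~\ref{lem:dua:bis} (with $f=0$) to a genuine energy solution under the extra assumptions $\mu\in\Ll^\infty(\R_+;\W^{2,\infty}(\T^d))$ and $z_{\textnormal{in}}\in\L^2$. The natural strategy is to run the classical parabolic energy method on the equation $\partial_t z-\Delta(\mu z)=0$, but rewritten as a divergence-form equation. Since $\mu z$ is the natural unknown produced by Lemma~\ref{lem:dua:bis}, I would start by expanding $\Delta(\mu z)=\div(\mu\nabla z)+\div(z\nabla\mu)=\div(\mu\nabla z)+\nabla z\cdot\nabla\mu+z\Delta\mu$, so that $z$ solves $\partial_t z-\div(\mu\nabla z)=\nabla\cdot(z\nabla\mu)$ (or, keeping it in the form $\partial_t z-\div(\mu\nabla z)-\nabla z\cdot\nabla\mu-z\Delta\mu=0$), an equation with bounded measurable coefficients and bounded (indeed $\W^{1,\infty}$, resp.\ $\L^\infty$) lower order terms. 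By standard parabolic theory (Lions–Magenes / Galerkin), for $z_{\textnormal{in}}\in\L^2(\T^d)$ this has a unique weak solution in $\mathscr{C}^0(\R_+;\L^2)\cap\Ll^2(\R_+;\H^1)$; uniqueness in the larger class $\Ll^2(\R_+;\L^2)$ from Lemma~\ref{lem:dua:bis} then forces this solution to coincide with $z$, giving the claimed regularity $z\in\mathscr{C}^0(\R_+;\L^2(\T^d))$ in point~1.

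For the energy estimate \eqref{ineq:dua:bis}, I would test the equation against $z$ itself (which is a legitimate test function once $z\in\Ll^2(\R_+;\H^1)$, after the usual Steklov/time-mollification to justify $\frac12\frac{\dd}{\dd t}\|z\|_2^2=\langle\partial_t z,z\rangle$). Using $\int_{\T^d}\Delta(\mu z)\,z=-\int_{\T^d}\nabla(\mu z)\cdot\nabla z=-\int_{\T^d}\mu|\nabla z|^2-\int_{\T^d}z\,\nabla\mu\cdot\nabla z$, and then integrating by parts once more in the last term, $\int_{\T^d}z\,\nabla\mu\cdot\nabla z=\frac12\int_{\T^d}\nabla\mu\cdot\nabla(z^2)=-\frac12\int_{\T^d}(\Delta\mu)z^2$, one obtains
\begin{align*}
\frac12\frac{\dd}{\dd t}\|z(t)\|_2^2+\int_{\T^d}\mu\,|\nabla z|^2=\frac12\int_{\T^d}(\Delta\mu)\,z^2\leq\frac12\|(\Delta\mu)^+(t)\|_\infty\,\|z(t)\|_2^2.
\end{align*}
Grönwall's lemma then yields exactly \eqref{ineq:dua:bis}. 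Point~2 follows by a higher-order energy estimate: if additionally $z_{\textnormal{in}}\in\H^1$, test the (Galerkin) equation against $\partial_t z$ (or against $-\Delta z$, or differentiate in space), using $\mu\in\W^{2,\infty}$ to absorb the lower order terms $\nabla z\cdot\nabla\mu$ and $z\Delta\mu$ into $\|z\|_{\H^1}$ and a small multiple of $\|\partial_t z\|_2$; this gives a uniform $\Ll^2_{\textnormal{loc}}(\R_+;\L^2)$ bound on $\partial_t z$ together with an $\L^\infty_{\textnormal{loc}}(\R_+;\H^1)$ bound on $z$, passing to the limit in the Galerkin scheme.

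Point~3 is the maximum-principle part. For the lower bound $z\geq0$ I would test against $z^-:=\max(-z,0)$ (again after time-mollification), exploiting nonnegativity of $z_{\textnormal{in}}$ and that, on $\{z<0\}$, the same computation as above gives $\frac12\frac{\dd}{\dd t}\|z^-\|_2^2+\int\mu|\nabla z^-|^2\leq\frac12\int(\Delta\mu)(z^-)^2$, whence $z^-\equiv0$ by Grönwall. For the upper bound, set $M(t):=\|z_{\textnormal{in}}\|_\infty\exp(\int_0^t\|(\Delta\mu)^+(\sigma)\|_\infty\,\dd\sigma)$ and test the equation satisfied by $z-M(t)$ against $(z-M(t))^+$; here $M'(t)=\|(\Delta\mu)^+(t)\|_\infty M(t)$ is chosen precisely so that the bad term $\frac12\int(\Delta\mu)z^2$ (more precisely $\int(\Delta\mu)\,z\,(z-M)^+$ after the integration by parts, bounded by $\|(\Delta\mu)^+\|_\infty\int z(z-M)^+$) is dominated by the contribution $-\int M'(t)(z-M)^+\,\dd x$ coming from $\partial_t M$, leaving a Grönwall inequality for $\|(z-M(t))^+\|_2^2$ with zero initial datum. \textbf{The main obstacle} is not any single estimate — these are textbook — but the bookkeeping needed to justify all the integrations by parts and the use of $z$, $z^-$, $(z-M)^+$ as test functions for a solution that is a priori only in $\Ll^2(\R_+;\L^2)$; the clean way around this is to not work with $z$ directly at all but to construct the solution afresh by Galerkin approximation, prove all three sets of estimates on the smooth approximants where every manipulation is legal, pass to the limit, and only at the very end invoke the uniqueness part of Lemma~\ref{lem:dua:bis} to identify the limit with $z$.
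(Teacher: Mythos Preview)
Your proposal is correct and follows the same underlying approach as the paper: the paper's own proof simply cites \cite[Corollary 19]{dm} for point~1, sketches exactly your ``expand $-\Delta(\mu z)$ and multiply by $-\Delta z$'' argument for point~2, and cites \cite[Corollary 18]{dm} for point~3, whereas you have written out the standard energy/Gr\"onwall and Stampacchia-truncation computations that those citations encapsulate. The only (harmless) slip is in your bookkeeping for the upper bound in point~3: after testing against $(z-M)^+$ and integrating by parts, the cross term is $\tfrac12\int(\Delta\mu)(z-M)^{+2}+M\int(\Delta\mu)(z-M)^+$ rather than $\int(\Delta\mu)\,z\,(z-M)^+$, but the $M$-term is indeed absorbed by $-M'\int(z-M)^+$ and the remaining $\tfrac12\|(\Delta\mu)^+\|_\infty\|(z-M)^+\|_2^2$ feeds Gr\"onwall exactly as you say.
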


  \begin{proof}
\par
 For the estimate \eqref{ineq:dua:bis}, it is done for instance in \cite[Corollary 19]{dm}. 
\par
The case in which $\mu\in\L^\infty(0,T;\textnormal{W}^{2,\infty}(\T^d))$ and $z_{in}\in\H^1(\T^d)$ is in fact a consequence of the standard parabolic theory, because the operator $-\Delta(\mu z)$ may then be expanded. More precisely, we have
    \begin{align*}
\partial_t z -\mu\Delta z - 2 \nabla z \cdot \nabla \mu - z \Delta \mu =0,
    \end{align*}
    and the announced regularity is recovered by multiplying this equation by $-\Delta z$ and performing the usual estimates (integrations by parts and use of Young's inequality).
\par 
 Lastly, the maximum principle stated and proved in \cite[Corollary 18]{dm} ensures the uniform estimate (\ref{ineq:dua:max}) in the case of bounded nonnegative initial data. 
$\qedhere$
   \end{proof}

   \section{Bootstrap lemma}

         \begin{lem}\label{lem:boot}
Fix $P\in\R_+[X]$ such that $P(0)=P'(0)=0$. Then there exists $\delta>0$ such that, for any continuous map $\lambda:\R_+\rightarrow\R_+$ \[\lambda \leq \delta/2 +P(\lambda)\Longrightarrow \lambda < \delta.\] 
\end{lem}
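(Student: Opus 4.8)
The plan is a standard continuity (bootstrap) argument whose only quantitative ingredient is that $P$ vanishes to second order at $0$. First I would use $P\in\R_+[X]$ and $P(0)=P'(0)=0$ to write $P(X)=\sum_{k\geq 2}a_kX^k$ with $a_k\geq 0$, so that $P(x)\leq Cx^2$ for every $x\in[0,1]$, where $C:=\sum_{k\geq 2}a_k$. I then fix once and for all a real number $\delta\in(0,1]$ small enough that $C\delta<\tfrac12$. With this choice one has the ``barrier'' inequality $P(x)\leq C\delta\,x<\tfrac x2$ for every $x\in(0,\delta]$; in particular $P(\delta)<\delta/2$.

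Let now $\lambda:\R_+\rightarrow\R_+$ be continuous with $\lambda(t)\leq\delta/2+P(\lambda(t))$ for all $t\geq 0$. As in each application of the lemma, one moreover has $\lambda(0)<\delta$ (indeed $\lambda(0)\leq\delta/2$ there), a hypothesis that cannot be dispensed with, since a large constant function also satisfies $\lambda\leq\delta/2+P(\lambda)$. Consider the set $A:=\{t\geq 0:\lambda(t)<\delta\}$. It is open in $\R_+$ by continuity of $\lambda$, and nonempty because $0\in A$. The key step is to check that $A$ is also closed in $\R_+$: if $t_n\in A$ and $t_n\to t_\star$, then $\lambda(t_\star)=\lim_n\lambda(t_n)\leq\delta$; and if one had $\lambda(t_\star)=\delta$, evaluating the assumed inequality at $t_\star$ would give $\delta\leq\delta/2+P(\delta)$, i.e. $P(\delta)\geq\delta/2$, contradicting $P(\delta)<\delta/2$. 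Hence $\lambda(t_\star)<\delta$, i.e. $t_\star\in A$.

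Since $\R_+$ is connected and $A$ is nonempty, open and closed, we conclude $A=\R_+$, which is exactly $\lambda<\delta$. There is no serious difficulty here: the entire content is the choice of $\delta$, which converts the sublinearity $P(x)=o(x)$ as $x\to 0^+$ into the strict separation $P(\delta)<\delta/2$ that prevents $\lambda$ from ever reaching the value $\delta$, after which connectedness of $\R_+$ propagates the bound from $t=0$ to all times. Equivalently, and perhaps more vividly: for $\delta$ small the set $\{x\geq 0:x-P(x)\leq\delta/2\}$ splits as $[0,a(\delta)]\cup[b(\delta),+\infty)$ with $a(\delta)<\delta<b(\delta)$ (the function $x\mapsto x-P(x)$ being concave on $\R_+$, with value $0$ and slope $1$ at the origin), so a continuous $\lambda$ taking values in this disconnected set and such that $\lambda(0)\leq\delta/2\leq a(\delta)$ must remain in the lower piece $[0,a(\delta)]\subset[0,\delta)$.
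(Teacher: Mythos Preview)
Your proof is correct and follows essentially the same route as the paper: choose $\delta$ so that $P(x)<x/2$ on $(0,\delta]$, observe that this forces $\{\lambda\leq\delta\}=\{\lambda<\delta\}$, and conclude by connectedness of $\R_+$. Your remark that one needs $\lambda(0)<\delta$ (equivalently, that the set must be nonempty) is well taken; the paper's one-line proof asserts non-emptiness without comment, but this hypothesis is indeed supplied in every application of the lemma.
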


\begin{proof}
Simply choose $\delta$ such that $0<x\leq \delta \Rightarrow P(x) < x/2$. This definition together with the inequality satisfied by $\lambda$ ensure the set equality $\{\lambda \leq \delta\} = \{\lambda < \delta\}$. The previous set is therefore closed, open and non-empty, and so it equals $\R_+$. $\qedhere$
\end{proof}

\section{Estimates for the heat kernel}

In this section, we recall some classical results about the heat kernel (on the torus $\T^d$). We recall that we denote $Q_T := [0,T] \times \T^d$.

We start with the Maximal regularity result for the heat flow, here in the context of the torus:

\begin{thm}\label{thm:max}
  For any $1<k<+\infty$, $m>0$ and $\ffi\in\mathscr{C}^\infty(\R_+\times \T^d)$ vanishing at $t=0$, one has
  \begin{align*}
    m\| \Delta \ffi \|_{\L^k(Q_T)} \lesssim_{k,d} \|\partial_t \ffi -m \Delta\ffi\|_{\L^k(Q_T)}.
   \end{align*}
\end{thm}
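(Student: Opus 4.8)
The plan is to prove the maximal regularity estimate $m\|\Delta\ffi\|_{\L^k(Q_T)} \lesssim_{k,d} \|\partial_t\ffi - m\Delta\ffi\|_{\L^k(Q_T)}$ by reducing to the case $m=1$ via the rescaling $t\mapsto mt$ (which changes neither the torus nor the $\L^k$ nature of the estimate, only introducing the claimed factor of $m$), and then invoking the classical parabolic maximal regularity theory for the heat operator $\partial_t - \Delta$ on the torus. Concretely, I would set $g := \partial_t\ffi - m\Delta\ffi$, regard this as a forcing term, and use the fact that, because $\ffi$ vanishes at $t=0$ and is smooth, $\ffi$ is \emph{the} solution of the inhomogeneous heat equation with zero initial data and right-hand side $g$ (extended by $0$ to negative times). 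Then $\Delta\ffi$ is recovered from $g$ by the operator $\Delta(\partial_t - m\Delta)^{-1}$, and the content of the theorem is precisely that this operator is bounded on $\L^k(\R\times\T^d)$ for $1<k<\infty$.

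The key steps, in order: first, rescale time by $m$ to reduce to $m=1$; second, extend $\ffi$ and $g$ by zero for $t\le 0$ (legitimate since $\ffi$ vanishes at $t=0$ together with all its derivatives in an appropriate sense on a smooth solution, or at least one can approximate), so that $\partial_t\ffi - \Delta\ffi = g$ holds on all of $\R\times\T^d$ in the distributional sense; third, express $\Delta\ffi$ via the Fourier multiplier in the $\T^d$ variable and the resolvent in time, i.e.\ on the frequency side $\widehat{\ffi}(\tau,\xi) = \widehat g(\tau,\xi)/(i\tau + |\xi|^2)$ so that $\widehat{\Delta\ffi}(\tau,\xi) = -|\xi|^2 \widehat g(\tau,\xi)/(i\tau+|\xi|^2)$; fourth, recognize the symbol $m(\tau,\xi) = |\xi|^2/(i\tau+|\xi|^2)$ as a Mikhlin–Hörmander-type multiplier (parabolically homogeneous of degree $0$ and smooth away from the origin, with the appropriate derivative bounds), and conclude $\L^k$-boundedness for $1<k<\infty$ by the (transference / periodic) Mikhlin multiplier theorem on $\R\times\T^d$, or equivalently by citing the standard $\L^k$ maximal regularity for the heat semigroup (e.g.\ via the Hilbert-space-valued Calderón–Zygmund theory, or Lamberton's theorem on maximal regularity of the heat semigroup on $\L^k$); fifth, undo the rescaling to restore the factor $m$.

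The main obstacle is not conceptual but bookkeeping: one must make sure the zero-initial-data extension is clean (so there is no spurious surface term at $t=0$ contributing a singular contribution), and one must have at hand a citable statement of maximal $\L^k$ regularity on the torus rather than on $\R^d$. Since the paper is content to write $\lesssim_{k,d}$ and to treat this as a classical appendix fact, the cleanest route is simply to transfer the known $\R^d$ result to $\T^d$ — either by the standard periodization/transference principle for Fourier multipliers (de Leeuw-type), or by citing a reference that states parabolic maximal regularity directly on compact manifolds or the torus. I would therefore phrase the proof as: reduce to $m=1$ by scaling; cite (or sketch via the Mikhlin multiplier theorem after periodization) the boundedness of $\Delta(\partial_t-\Delta)^{-1}$ on $\L^k(\R\times\T^d)$ with zero data; and conclude. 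If a fully self-contained argument is wanted, the honest hard part is proving the periodic Mikhlin multiplier theorem, which I would instead quote from the literature.
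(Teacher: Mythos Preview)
Your proposal is correct and follows essentially the same route as the paper: reduce to $m=1$ by scaling, write $\Delta\ffi$ as the Fourier multiplier $(\tau,\xi)\mapsto -|\xi|^2/(i\tau+|\xi|^2)$ applied to $F=\partial_t\ffi-\Delta\ffi$, and invoke the Mikhlin--H\"ormander theorem (the paper simply cites a reference for the details). Your additional remarks on the zero-extension and on transferring the multiplier result to $\R\times\T^d$ are natural elaborations of what the paper leaves implicit.
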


We then write down a classical semigroup property for the heat kernel, once again in the context of the torus:

  \begin{thm}\label{lem:unibound}
        For any $k>1+\frac{d}{2}$, any $\Phi\in\mathscr{C}^\infty(\R_+\times\T^d)$, and any $m,T > 0$, one has
        \begin{align*}
\|\Phi\|_{\L^\infty(Q_T)} \lesssim_{m,k,d} \|\Phi(0)\|_\infty + \|\partial_t \Phi -m \Delta\Phi\|_{\L^k(Q_T)} + \sup_{s\in[0,T]} \left|\int_{\T^d} \Phi(s)\right|.
         \end{align*}
      \end{thm}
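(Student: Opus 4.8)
\textbf{Proof plan for Theorem~\ref{lem:unibound}.}
The plan is to use the Duhamel representation on the torus together with the smoothing properties of the heat semigroup, after peeling off the zero mode. First I would split $\Phi$ into its spatial average $\langle\Phi(s)\rangle := \int_{\T^d}\Phi(s)$ and the mean-zero part $\Phi - \langle\Phi\rangle$; since the average is controlled directly by the last term in the statement, it suffices to bound the mean-zero component, on which the heat semigroup $e^{tm\Delta}$ decays exponentially and behaves like a genuinely smoothing operator. Write $g := \partial_t\Phi - m\Delta\Phi$ and $\Phi_0 := \Phi - \langle\Phi\rangle$; then $\Phi_0$ solves $\partial_t\Phi_0 - m\Delta\Phi_0 = g_0$ (the mean-zero part of $g$) with initial datum $\Phi_0(0)$, so that by Duhamel
\begin{align*}
\Phi_0(t) = e^{tm\Delta}\Phi_0(0) + \int_0^t e^{(t-s)m\Delta} g_0(s)\,\dd s.
\end{align*}
The first term is bounded in $\L^\infty$ by $\|\Phi_0(0)\|_\infty \lesssim \|\Phi(0)\|_\infty$ using the contractivity of $e^{tm\Delta}$ on $\L^\infty(\T^d)$.

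For the Duhamel integral, the key is the kernel estimate: on the mean-zero subspace, $e^{\tau m\Delta}$ maps $\L^k(\T^d)$ into $\L^\infty(\T^d)$ with norm $\lesssim_{m,k,d} \tau^{-d/(2k)} e^{-c\tau}$ for $\tau>0$, which follows from the Gaussian bounds for the periodized heat kernel (or from Weyl's law / Sobolev embedding applied to the spectral decomposition). Then
\begin{align*}
\|\Phi_0(t)\|_\infty \lesssim \|\Phi(0)\|_\infty + \int_0^t (t-s)^{-d/(2k)} e^{-c(t-s)} \|g_0(s)\|_{\L^k(\T^d)}\,\dd s,
\end{align*}
and by Hölder's inequality in the time variable (with exponents $k$ and $k' = k/(k-1)$) the integral is bounded by $\|g_0\|_{\L^k(Q_T)}$ times $\big(\int_0^\infty \tau^{-dk'/(2k)} e^{-ck'\tau}\,\dd\tau\big)^{1/k'}$. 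This last integral converges precisely because $k > 1 + d/2$ forces $\frac{dk'}{2k} = \frac{d}{2(k-1)} < 1$, making the singularity at $\tau = 0$ integrable; the exponential factor takes care of $\tau\to\infty$. Finally $\|g_0\|_{\L^k(Q_T)} \lesssim \|g\|_{\L^k(Q_T)} = \|\partial_t\Phi - m\Delta\Phi\|_{\L^k(Q_T)}$, and adding back the average contribution $\sup_{s}|\langle\Phi(s)\rangle|$ gives the claim.

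The main obstacle — really the only genuinely technical point — is establishing the $\L^k\to\L^\infty$ smoothing bound $\|e^{\tau m\Delta}\|_{\L^k_0 \to \L^\infty} \lesssim \tau^{-d/(2k)}e^{-c\tau}$ with the correct power and an exponential gain on the mean-zero subspace. On $\R^d$ this is immediate from the explicit Gaussian, and on $\T^d$ one gets it either by comparing the periodized kernel to the Euclidean one for small $\tau$ (the short-time behavior is Euclidean, giving the $\tau^{-d/(2k)}$ factor) and using the spectral gap for large $\tau$ (giving $e^{-c\tau}$), or directly by interpolating the $\L^2\to\L^2$ decay $e^{-c\tau}$ with an $\L^1\to\L^\infty$ bound $\lesssim \tau^{-d/2}$. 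Once that kernel estimate is in hand, everything else is Duhamel plus Hölder, and the hypothesis $k > 1 + d/2$ enters exactly where the time integral needs to converge.
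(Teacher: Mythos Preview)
Your proof is correct and takes a genuinely different route from the paper. Both arguments begin the same way: subtract the spatial average, write Duhamel for the mean-zero part, and bound the initial-data contribution by the maximum principle. The divergence is in how the Duhamel integral is estimated. You use a single global smoothing bound $\|e^{\tau m\Delta}\|_{\L^k_0\to\L^\infty}\lesssim \tau^{-d/(2k)}e^{-c\tau}$ on the mean-zero subspace, then Hölder in time; the integrability condition $\frac{d}{2(k-1)}<1$ is exactly $k>1+\tfrac d2$. The paper instead performs a Littlewood-Paley decomposition of the source $f$, applies Bernstein's inequality and an annulus-localized decay estimate (their Lemmas~\ref{lem:bern} and~\ref{lem:decay}) to obtain a bound $\lesssim m^{d/k-2/k'}\|f_j\|_{\L^k(Q_T)}$ on each dyadic block, sums over $j$ using Hölder in the block index (again invoking $k>1+\tfrac d2$, now in the equivalent form $d/k-2/k'<0$), and closes with a Besov-type embedding $\|(\|f_j\|_k)_j\|_{\ell^k}\lesssim\|f\|_k$.

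Your approach is more elementary and shorter: one kernel estimate replaces the whole Littlewood-Paley apparatus, and the ``main obstacle'' you flag (the $\L^k\to\L^\infty$ bound with exponential gain) is entirely standard, obtainable as you say from the periodized Gaussian for small $\tau$ and the spectral gap for large $\tau$. The paper's route, while heavier, has the advantage of being modular---the dyadic decomposition and Bernstein-type estimates transfer readily to other settings (Besov-scale initial data, more general semigroups), which may explain the authors' choice given that the norm $N_k$ used elsewhere in the paper is itself of Besov type.
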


 Theorems \ref{thm:max} and \ref{lem:unibound} are classical results in the case of the whole space or in the case of a smooth domain with Dirichlet boundary conditions (with a slightly different statement). We briefly explain below how they can be obtained in the case considered here (flat $d$-dimensional torus).
\medskip

We start with Theorem \ref{thm:max}. By homogeneity, it is sufficient to treat the case $m=1$. 
The equation $\partial_t \ffi - \Delta \ffi = F$ translates, in the (space-time) Fourier variables, as $i\tau \widehat{\ffi} + |\xi|^2 \widehat{\ffi} = \widehat{F}$. In particular, the map $F\mapsto \Delta \ffi$ is therefore the operator defined by the multiplier $(\tau,\xi)\mapsto \frac{-|\xi|^2}{i\tau +|\xi|^2}$, which satisfies the assumptions of the Mikhlin-Hörmander theorem. For more details (written in the case of the whole space $\R^d$), we refer for example to \cite[Appendix D.5]{robinson}.
\medskip

For Theorem \ref{lem:unibound}, we give some more details. Since
we are going to use a dyadic decomposition of the Fourier modes, we define 
 for $m\in\N^\star$:
    \[\C_m:=\{k\in\mathbf{Z}^d\,:\, m \leq |k| < 2m\}, \qquad 
\V_m:=\textnormal{Span}\{x\mapsto e^{i k\cdot x}\,:\,k\in\C_m\}   . \]
We recall first the

    \begin{lem}[Bernstein]\label{lem:bern}
For $m\in\N^\star$, $k\in[1,+\infty)$, $f\in\V_m$, one has  $\|f\|_\infty \lesssim (2m)^{\frac{d}{k}}\|f\|_k$.
    \end{lem}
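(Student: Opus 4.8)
The plan is to prove Bernstein's inequality by convolving $f$ with a periodic kernel that acts as the identity on $\V_m$ and whose $\L^{k'}$-norm (with $\tfrac1k+\tfrac1{k'}=1$) scales like $(2m)^{d/k}$; the estimate then follows from Young's convolution inequality on $\T^d$.

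\emph{Construction of the kernel.} I would fix once and for all a radial $\theta\in\mathscr{C}^\infty_c(\R^d)$ with $\theta=1$ on the annulus $\{1\le|\xi|\le 2\}$ and $\mathrm{supp}\,\theta\subset\{\tfrac12<|\xi|<4\}$, and let $\psi$ be its inverse Fourier transform on $\R^d$ (a Schwartz function, which we may normalise so that the periodisation below has exactly the Fourier coefficients claimed). For $m\in\N^\star$ set $\psi_m(x):=m^d\psi(mx)$, so that $\widehat{\psi_m}(\xi)=\theta(\xi/m)$, which equals $1$ as soon as $m\le|\xi|<2m$. Define the torus kernel $K_m(x):=\sum_{j\in\Z^d}\psi_m(x+2\pi j)$, the series converging in $\mathscr{C}^\infty$ by the rapid decay of $\psi$. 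By Poisson's summation formula the $k$-th Fourier coefficient of $K_m$ equals $\theta(k/m)$, hence equals $1$ for every $k\in\C_m$. Consequently $K_m\star f=f$ for all $f\in\V_m$, and Young's inequality on $\T^d$ gives $\|f\|_\infty=\|K_m\star f\|_\infty\le\|K_m\|_{k'}\,\|f\|_k$.

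\emph{Size of the kernel.} It remains to show $\|K_m\|_{k'}\lesssim_d (2m)^{d/k}$ for $m\ge1$. Fix $N:=d+1$; then $|\psi(y)|\lesssim_d(1+|y|)^{-N}$, hence $|\psi_m(x)|\lesssim_d m^d(1+m|x|)^{-N}$. On the fundamental domain $[-\pi,\pi]^d$ the contribution of the terms $j\ne0$ in $K_m$ is bounded by $\lesssim_d m^{d-N}\sum_{j\ne0}|j|^{-N}\lesssim_d 1$, so $|K_m(x)|\lesssim_d m^d(1+m|x|)^{-N}+1$ there. Taking the $\L^{k'}$-norm and substituting $y=mx$ (and, for $k'=\infty$, i.e. $k=1$, reading off $\|K_m\|_\infty\lesssim_d m^d$ directly) we get, since $Nk'>d$ and $m^{dk'-d}\ge1$, $\|K_m\|_{k'}^{k'}\lesssim_d m^{dk'}\,m^{-d}\int_{\R^d}(1+|y|)^{-Nk'}\,\dd y+1\lesssim_d m^{d(k'-1)}=m^{dk'/k}$, whence $\|K_m\|_{k'}\lesssim_d m^{d/k}\le(2m)^{d/k}$. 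Combining with $\|f\|_\infty\le\|K_m\|_{k'}\|f\|_k$ finishes the proof.

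\emph{Main difficulty.} There is no serious obstacle: this is the classical Bernstein estimate transplanted to the torus. The only points requiring care are the rigorous use of Poisson summation to identify $\widehat{K_m}$, the bookkeeping of the power of $m$ in $\|K_m\|_{k'}$ (in particular the endpoint $k=1$), and matching the periods in the periodisation to the normalisation of $\T^d$ fixed earlier; none of these is delicate.
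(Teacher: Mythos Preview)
Your proof is correct and follows the same strategy as the paper: exhibit a convolution kernel that acts as the identity on $\V_m$ and bound its $\L^{k'}$ norm, then apply Young's inequality. The only difference is cosmetic---the paper suggests working directly with the Dirichlet kernel on $\T^d$ (whose $\L^{k'}$ norm one estimates via the pointwise bound $|D_N(x)|\lesssim\min(N,|x|^{-1})$ in each coordinate), whereas you periodise a smooth Littlewood--Paley bump from $\R^d$; both choices yield the same scaling $(2m)^{d/k}$.
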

It can be obtained by estimating the $L^{k'}$ norm of the Dirichlet kernel (on $\T^d$) and using 
Young's inequality for convolutions.
\medskip

We then state the

    \begin{lem}\label{lem:decay}
      There exists $\textnormal{c}>0$ such that, for $p \in[1,\infty]$, any $m\in\N^\star$ and $f\in\V_m$, the following estimate  holds:
      \begin{align*}
\forall t\geq 0,\qquad \left\| e^{t\Delta} f\right\|_p \lesssim e^{-\textnormal{c} tm^2} \| f\|_p.
        \end{align*}
      \end{lem}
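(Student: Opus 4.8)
\textbf{Plan for proving Lemma~\ref{lem:decay}.}
The plan is to exploit the fact that the heat semigroup acts diagonally on the Fourier basis, so that for $f\in\V_m$ we have an exact formula $e^{t\Delta}f = \sum_{k\in\C_m} \widehat{f}(k)\, e^{-t|k|^2}\, e^{ik\cdot x}$. The obstacle is that there is no reason for the $\L^p\to\L^p$ operator norm of a Fourier multiplier supported on $\C_m$ to equal the supremum of $|e^{-t|k|^2}|$ over $k\in\C_m$ — that identity only holds for $p=2$ by Plancherel. For general $p\in[1,\infty]$ one must go through a kernel estimate. So the first step I would take is to factor the operator as $e^{t\Delta}\big|_{\V_m} = e^{t\Delta}\circ \Pi_m$, where $\Pi_m$ is the spectral projector onto $\V_m$; better, to gain the decay, write $e^{t\Delta}\big|_{\V_m} = e^{\frac t2 \Delta}\circ\big(e^{\frac t2\Delta}\Pi_m\big)$ and note that, restricted to frequencies with $|k|\ge m$, the factor $e^{\frac t2\Delta}\Pi_m$ is a Fourier multiplier whose symbol is bounded by $e^{-\frac t2 m^2}$ times a symbol with good (Mikhlin-type, or rather Littlewood--Paley-type) bounds.

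More concretely, the key step is the following: fix a smooth radial cutoff $\chi$ on $\R^d$ with $\chi\equiv 1$ on $\{1\le |\xi|<2\}$ and supported in $\{1/2<|\xi|<4\}$, and consider the rescaled periodic kernel
\begin{align*}
K_{m,t}(x) := \sum_{k\in\Z^d} \chi(k/m)\, e^{-t|k|^2}\, e^{ik\cdot x}.
\end{align*}
Because $\chi(k/m)=1$ on $\C_m$, one has $e^{t\Delta}f = K_{m,t}\star f$ for every $f\in\V_m$, so by Young's inequality $\|e^{t\Delta}f\|_p \le \|K_{m,t}\|_{\L^1(\T^d)}\,\|f\|_p$, and the whole lemma reduces to the bound $\|K_{m,t}\|_{\L^1(\T^d)} \lesssim e^{-\c t m^2}$ with a constant independent of $m$ and $t$. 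To get this, write $e^{-t|k|^2} = e^{-\frac t2 m^2}\, e^{-t(|k|^2-\frac12 m^2)}$; on the support of $\chi(k/m)$ one has $|k|^2 \ge \frac14 m^2 \ge \frac18 m^2 + \frac18|k|^2$ (up to adjusting constants on the annulus $|k|\sim m$), hence $e^{-t|k|^2}\le e^{-\c t m^2}\, e^{-\frac t8|k|^2}$ for a suitable $\c>0$; so $K_{m,t} = e^{-\c t m^2}\,\widetilde K_{m,t}$ with $\widetilde K_{m,t}(x) = \sum_k \chi(k/m) e^{-t(|k|^2 - \frac12 m^2 + \c m^2)} e^{ik\cdot x}$, and it remains to show $\sup_{m,t}\|\widetilde K_{m,t}\|_{\L^1(\T^d)} < \infty$.

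The last point is a standard Littlewood--Paley estimate: the symbol $\xi\mapsto \chi(\xi/m)\, e^{-t'|\xi|^2}$ (with $t'\ge 0$, after absorbing the harmless Gaussian factors) satisfies, uniformly in $m$ and $t'$, the scale-invariant bounds $|\partial^\alpha_\xi(\cdot)| \lesssim_\alpha m^{-|\alpha|}$ on the annulus $|\xi|\sim m$ — indeed each derivative falling on $\chi(\cdot/m)$ gains $m^{-1}$, and each derivative falling on $e^{-t'|\xi|^2}$ produces a factor bounded by $(\sqrt{t'}\,)^{|\alpha|}\,(\text{polynomial in }\sqrt{t'}\,|\xi|)\,$ which on $|\xi|\sim m$ is $\lesssim m^{-|\alpha|}$ after using that $\sup_{s\ge 0}s^j e^{-s^2}<\infty$. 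By Poisson summation (comparing the periodic kernel with its $\R^d$ counterpart, the extra images being exponentially small) this yields $\|\widetilde K_{m,t}\|_{\L^1(\T^d)}\lesssim \|\mathcal F^{-1}_{\R^d}[\chi(\cdot/m)e^{-t'|\cdot|^2}]\|_{\L^1(\R^d)}$, which by the scaling $\xi\mapsto m\xi$ is bounded by a constant independent of $m$ and $t'$ (the resulting function being a Schwartz function with seminorms controlled uniformly). Combining, $\|K_{m,t}\|_{\L^1(\T^d)}\lesssim e^{-\c t m^2}$, and Young's inequality closes the argument. The only mildly delicate point is keeping the constants uniform in both $m$ and $t$ simultaneously in this last kernel bound; once the symbol estimates are written with the scaling made explicit, it is routine.
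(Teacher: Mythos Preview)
Your proof is correct and follows essentially the same strategy as the paper: both write $e^{t\Delta}f$ as a convolution against a kernel built from a smooth annular cutoff, reduce via Young's inequality to an $\L^1$ bound on that kernel, and extract the factor $e^{-\c tm^2}$ from the Gaussian symbol on the annulus $|\xi|\sim m$. The only difference is in bookkeeping for the $\L^1$ kernel bound: the paper first rescales time ($t\to t/m^2$), so the kernel becomes $\widecheck{\dd_m\ffi_t}$ with $\ffi_t(x)=e^{-tx^2}\ffi(x)$, and then invokes a one-line external estimate $\|\widecheck{\dd_m\psi}\|_1\lesssim\|\psi''\|_\infty$ together with $\|\ffi_t''\|_\infty\lesssim e^{-\c t}$; your route via explicit scale-invariant symbol bounds and Poisson summation achieves the same thing but with a little more work kept visible.
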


      \begin{proof}
        There exists $\ffi\in\mathscr{D}(\R^\star)$ such that for any $g\in\V_m$, \[g=\sum_{k\in\Z^d} c_k(g) \ffi(|k|/m) e^{i k\cdot x}.\]
        This is in particular true for $g=e^{t\Delta} f$ for $t\geq 0$, and we thus infer
\[e^{t/m^2\Delta f}=\sum_{k\in\Z^d} c_k(f)e^{-t |k|^2 /m^2} \ffi(|k|/m) e^{i k\cdot x} = f\star \widecheck{\dd_m \ffi_t},\]
        where $\ffi_t(x):= e^{-tx^2} \ffi(x)$, $\dd_m\ffi_t(x) = \ffi_t(x/m)$, and for any test function $\psi\in\mathscr{D}(\R)$,  
  $$ \widecheck{\psi}(x) := \sum_{k\in\Z^d} \psi(|k|)e^{ik\cdot x}, $$
so that here
 $$ \widecheck{\dd_m \ffi_t}(x) := \sum_{k\in\Z^d} \ffi(|k|/m) e^{-t|k|^2/m^2} e^{ik\cdot x}. $$
Thanks to Young's inequality for convolutions, we only need to prove
 $$ \|\widecheck{\dd_m \ffi_t}\|_1 \lesssim e^{-\textnormal{c} t}. $$
This can be checked using \cite[Lemma 1.1]{I},
 from which we can extract the following result:
  For any $\psi\in\mathscr{D}(\R)$ and $m\in\N^\star$,
  $\|\widecheck{\dd_m \psi}\|_1 \lesssim \|\psi''\|_\infty$.
\medskip

In the particular case of $\psi=\ffi_t$, since $\ffi$ vanishes away from $0$, one has $\|\ffi_t''\|_\infty\lesssim e^{-\textnormal{c}t}$ and the estimate follows. $\qedhere$
\end{proof}

We infer from the two previous estimates 
the following one, for the heat flow generated by a source term in $\V_m$.
\begin{lem}\label{lem:heatan}
  For $T>0$, $m\in\N^\star$,  $k\in[1,+\infty)$ and $f\in\L^k(Q_T)$ such that $f(t, \cdot)\in\C_m$ for all $t\in[0,T]$, we have the following estimate:
  \begin{align*}
    \left\| \int_0^t e^{(t-s)\Delta}f(s)\,\dd s \right\|_{\L^\infty(Q_T)} \lesssim_d m^{\frac{d}{k}-\frac{2}{k'}} \|f\|_{\L^k(Q_T)}.
   \end{align*}
 \end{lem}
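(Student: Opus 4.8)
The plan is to combine the two Fourier-analytic ingredients just established — Bernstein's inequality (Lemma~\ref{lem:bern}) and the exponential decay of the heat semigroup on a dyadic block (Lemma~\ref{lem:decay}) — with a scalar convolution estimate in time. First I would fix $t\in[0,T]$ and, for $s\in[0,t]$, write $g(s):=e^{(t-s)\Delta}f(s)$, which still has spatial frequencies in $\C_m$; by Bernstein, $\|g(s)\|_\infty\lesssim (2m)^{d/k}\|g(s)\|_k$. Then, since $f(s,\cdot)\in\V_m$, Lemma~\ref{lem:decay} gives $\|g(s)\|_k=\|e^{(t-s)\Delta}f(s)\|_k\lesssim e^{-\c(t-s)m^2}\|f(s)\|_k$. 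Hence
\begin{align*}
\left\|\int_0^t e^{(t-s)\Delta}f(s)\,\dd s\right\|_\infty \lesssim m^{d/k}\int_0^t e^{-\c(t-s)m^2}\|f(s,\cdot)\|_{\L^k(\T^d)}\,\dd s,
\end{align*}
which is a one-dimensional convolution of the kernel $t\mapsto e^{-\c t m^2}\mathbf 1_{t\ge 0}$ with the scalar function $s\mapsto\|f(s,\cdot)\|_{\L^k(\T^d)}$.

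Next I would apply Young's convolution inequality in the time variable with exponents $(1,k,k)$ (i.e. $L^1 * L^k \hookrightarrow L^k$), so that the $\L^\infty(Q_T)$ norm on the left (after taking the supremum over $t$, which the bound above already does pointwise) is controlled by $\|e^{-\c\,\cdot\,m^2}\|_{\L^1(\R_+)}\,\|f\|_{\L^k(Q_T)}$. The $L^1$ norm of the kernel is exactly $\int_0^\infty e^{-\c t m^2}\,\dd t=\tfrac{1}{\c m^2}$, i.e.\ of size $m^{-2}$. Collecting the powers of $m$ gives $m^{d/k}\cdot m^{-2}$, and I would rewrite $-2 = -2/k - 2/k'$ to land on the stated exponent $m^{d/k-2}=m^{d/k-2/k-2/k'}$; a small bookkeeping remark is that the claimed exponent in the statement is $\tfrac dk-\tfrac2{k'}$, which matches after absorbing the $m^{-2/k}$ into the constant only if $k$ stays in a bounded range, so I would in fact argue that $m\ge 1$ makes $m^{-2/k}\le 1$, leaving the clean bound $m^{d/k-2/k'}$.

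The main obstacle — really the only non-routine point — is making sure the time-convolution step is applied correctly: one must first take the spatial norm for fixed time, reducing to a genuinely scalar (in $t$) convolution before invoking Young, rather than trying to apply a vector-valued Young inequality directly to $e^{(t-s)\Delta}f(s)$; and one must be slightly careful that Lemma~\ref{lem:decay} is being used with $p=k$ on each time slice (legitimate since $f(s,\cdot)\in\V_m$ for each $s$). The interplay of the two $m$-powers — the loss $m^{d/k}$ from Bernstein's Sobolev-type embedding against the gain $m^{-2}$ from integrating the heat decay in time — is what produces the final exponent, and I would double-check the exponent arithmetic, including the harmless $m^{-2/k}\le 1$ simplification, as the place where an error is most likely to creep in. Everything else (measurability, applying Fubini to interchange the $s$-integral with spatial norms, the fact that the estimate is uniform in $t\in[0,T]$) is standard.
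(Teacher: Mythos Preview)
Your overall strategy --- Bernstein to pass from $\L^\infty$ to $\L^k$ in space, then Lemma~\ref{lem:decay} for the semigroup decay, then a time-integral estimate --- is exactly the paper's. The gap is in the time-integral step. After Bernstein and the decay lemma you have, for each fixed $t$,
\[
\left\|\int_0^t e^{(t-s)\Delta}f(s)\,\dd s\right\|_{\L^\infty(\T^d)}\lesssim m^{d/k}(K*h)(t),
\]
where $K(\tau)=e^{-\c\tau m^2}\mathbf 1_{\tau\ge 0}$ and $h(s)=\|f(s,\cdot)\|_k$. To obtain the $\L^\infty(Q_T)$ norm you must take $\sup_t(K*h)(t)=\|K*h\|_{\L^\infty}$. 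Young with exponents $(1,k,k)$ only yields $\|K*h\|_{\L^k}\le\|K\|_{\L^1}\|h\|_{\L^k}$, which does not control this supremum; so the step ``$\L^\infty(Q_T)$ norm is controlled by $\|K\|_{\L^1}\|f\|_{\L^k(Q_T)}$'' is unjustified as written.

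The correct instance is $\L^{k'}*\L^k\hookrightarrow\L^\infty$ (equivalently, just H\"older's inequality at each fixed $t$), giving $\|K*h\|_{\L^\infty}\le\|K\|_{\L^{k'}}\|h\|_{\L^k}$; this is precisely what the paper does. Since
\[
\|K\|_{\L^{k'}}=\Bigl(\int_0^\infty e^{-k'\c\tau m^2}\,\dd\tau\Bigr)^{1/k'}\sim m^{-2/k'},
\]
the exponent $m^{d/k-2/k'}$ falls out directly, with no need for the $m^{-2/k}\le 1$ bookkeeping (which was only there to compensate for the wrong choice of Young exponents).
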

 \begin{proof}
   For $t\in[0,T]$ and $x\in\T^d$ we have, using Bernstein's Lemma~\ref{lem:bern},
 \begin{align*}
 \left|\int_0^t e^{(t-s)\Delta} f(s,x)\,\dd s\right| \leq \int_0^t \| e^{(t-s)\Delta} f(s)\|_\infty\,\dd s \leq  (2m)^{\frac{d}{k}}\int_0^t \| e^{(t-s)\Delta} f(s)\|_k\,\dd s.
  \end{align*}
    Now using the decay of the heat flow given in Lemma~\ref{lem:decay}, we infer  
\begin{align*}
 \left|\int_0^t e^{(t-s)\Delta} f(s,x)\,\dd s\right| \leq \int_0^t \| e^{(t-s)\Delta} f(s)\|_\infty\,\dd s \leq  (2m)^{\frac{d}{k}}\int_0^t e^{-(t-s)4\textnormal{c}m^2} \| f(s)\|_k\,\dd s,
\end{align*}
and the estimate follows directly from Hölder's inequality. $\qedhere$
                                                                                \end{proof}
We now can prove the following

\begin{prop}\label{thm:heat:main}
  Fix $T>0$, $k>1+\frac{d}{2}$. For all $f\in\L^k(Q_T)$ having a vanishing mean at all times, one has
  \begin{align*}
 \left\|\int_0^t e^{(t-s)\Delta} f(s)\,\dd s\right\|_{\L^\infty(Q_T)}
    \lesssim_{d,k} \|f\|_{\L^k(Q_T)}. 
    \end{align*}
  \end{prop}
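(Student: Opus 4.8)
The plan is to reduce the estimate to a dyadic sum of the pieces handled by Lemma~\ref{lem:heatan}. Since $f$ has vanishing mean at all times, we may decompose $f(s,\cdot)=\sum_{j\geq 0} f_j(s,\cdot)$ where $f_j(s,\cdot)$ is the projection of $f(s,\cdot)$ onto $\V_{2^j}$ (a standard Littlewood--Paley decomposition adapted to the torus, the zero mode being absent). By linearity,
\begin{align*}
\int_0^t e^{(t-s)\Delta} f(s)\,\dd s = \sum_{j\geq 0} \int_0^t e^{(t-s)\Delta} f_j(s)\,\dd s,
\end{align*}
and I would estimate each summand with Lemma~\ref{lem:heatan} applied with $m=2^j$, yielding
\begin{align*}
\left\| \int_0^t e^{(t-s)\Delta} f_j(s)\,\dd s\right\|_{\L^\infty(Q_T)} \lesssim_d 2^{j(\frac{d}{k}-\frac{2}{k'})} \|f_j\|_{\L^k(Q_T)}.
\end{align*}

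Next I would sum over $j$. Using the triangle inequality in $\L^\infty(Q_T)$ and then Hölder's inequality in $j$ (or Cauchy--Schwarz, or just crude absolute summability depending on the exponent), the bound becomes
\begin{align*}
\left\| \int_0^t e^{(t-s)\Delta} f(s)\,\dd s\right\|_{\L^\infty(Q_T)} \lesssim_d \sum_{j\geq 0} 2^{j(\frac{d}{k}-\frac{2}{k'})} \|f_j\|_{\L^k(Q_T)}.
\end{align*}
The exponent is $\frac{d}{k}-\frac{2}{k'}=\frac{d}{k}-2\big(1-\frac1k\big)=\frac{d+2}{k}-2$, which is strictly negative precisely because $k>1+\frac d2$. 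Hence $\sum_j 2^{j(\frac{d}{k}-\frac2k')}$ converges if one can first pull the summation inside, but more carefully one uses that $\|f_j\|_{\L^k(Q_T)}\lesssim \|f\|_{\L^k(Q_T)}$ uniformly in $j$ (Littlewood--Paley projections are bounded on $\L^k$ for $1<k<\infty$ by Mikhlin--Hörmander on the torus, exactly as invoked for Theorem~\ref{thm:max}), so the geometric series in $j$ closes the estimate with a constant depending only on $d$ and $k$.

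The main obstacle is the bookkeeping around the dyadic decomposition on the torus: one must ensure the projectors $f\mapsto f_j$ are $\L^k$-bounded uniformly in $j$ (again Mikhlin--Hörmander, as already used), that the decomposition converges appropriately so that the sum of the Duhamel integrals equals the Duhamel integral of the sum, and that the frequency-localized pieces genuinely satisfy the hypothesis $f_j(t,\cdot)\in\C_{2^j}$ of Lemma~\ref{lem:heatan} — here a mild overlap between consecutive dyadic shells is harmless and only costs an absolute constant. Once these routine points are dispatched, the negativity of $\frac{d+2}{k}-2$ under the assumption $k>1+\frac d2$ is exactly what makes the geometric series summable, which is the whole point of the hypothesis on $k$.
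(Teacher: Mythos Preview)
Your proposal is correct and follows the same overall route as the paper: Littlewood--Paley decomposition of $f$, application of Lemma~\ref{lem:heatan} to each block, and summation over $j$ using the negativity of $\frac{d}{k}-\frac{2}{k'}$ under the hypothesis $k>1+\frac{d}{2}$.

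The only difference lies in how the sum $\sum_{j} 2^{j(\frac{d}{k}-\frac{2}{k'})}\|f_j\|_{\L^k(Q_T)}$ is closed. The paper applies H\"older's inequality in $j$ to reach $\big(\sum_j \|f_j\|_{\L^k(Q_T)}^k\big)^{1/k}$ and then invokes the Besov-type bound of Lemma~\ref{prop:besov}, namely $\|(\|f_j\|_k)_j\|_{\ell^k}\lesssim\|f\|_k$, obtained by interpolating between Parseval ($k=2$) and the uniform $\L^\infty$-boundedness of the projections ($k=\infty$). You instead use directly that each projection $f\mapsto f_j$ is bounded on $\L^k$ uniformly in $j$ (via Mikhlin--H\"ormander, with smooth cutoffs) and sum the resulting geometric series. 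Both arguments are valid; yours is a touch more elementary in that it bypasses the $\ell^k$-summability lemma, and it has the incidental advantage of not requiring $k\geq 2$ (Lemma~\ref{prop:besov} as stated only covers $k\in[2,\infty]$, whereas for $d=1$ the hypothesis $k>1+\frac{d}{2}$ allows $k\in(3/2,2)$).
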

  Before starting the proof, we introduce a few notations related to the Littlewood-Paley decomposition for elements defined on the torus $\T^d$. Any $f\in\mathscr{D}'(\T^d)$ decomposes uniquely as
  \begin{align*}
f := \sum_{k\in\Z^d} c_k(f) e^{i k\cdot x}.
  \end{align*}
  For $j\in\N$, we introduce the corresponding \emph{dyadic block}
  \begin{align*}
    f_j := \sum_{k\in\C_{2^j}} c_k(f) e^{i k \cdot x}.
  \end{align*}
  If $f$ has zero mean (that is, $c_0(f)=0$), we have (at least in $\mathscr{D}'(\T^d)$)
  \begin{align*}
    f= \sum_{j\in\N} f_j,
\end{align*}
and this decomposition is orthogonal in $\L^2(\T^d)$ if $f$ belongs to this space.
We first state a  result about a norm (which is actually of Besov type)  which naturally appears when dealing with the previous decomposition:

\begin{lem}\label{prop:besov}
  For $k\in[2,+\infty]$ and $f\in\L^k(\T^d)$ having a vanishing mean, introducing the sequence $b_f:=(\|f_j\|_k)_{j\in\N}$, one has \[\| b_f
    \|_{\ell^k(\N)} \lesssim_k \|f\|_k.\]
\end{lem}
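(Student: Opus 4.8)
The statement is a Littlewood--Paley square-function type inequality: for $k \in [2,+\infty]$ and $f \in \L^k(\T^d)$ of vanishing mean, the sequence $b_f = (\|f_j\|_k)_{j\in\N}$ of dyadic-block norms lies in $\ell^k(\N)$ with $\|b_f\|_{\ell^k} \lesssim_k \|f\|_k$. The plan is to interpolate between the endpoint $k=2$, which is exact, and the endpoint $k=\infty$, which is elementary.

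First I would treat $k=2$. Since $f$ has zero mean, the decomposition $f = \sum_{j\in\N} f_j$ is orthogonal in $\L^2(\T^d)$ (each $f_j$ is supported on the disjoint frequency annulus $\C_{2^j}$), so Parseval gives $\sum_j \|f_j\|_2^2 = \|f\|_2^2$, i.e. $\|b_f\|_{\ell^2} = \|f\|_2$. Next I would treat $k=\infty$: for each $j$, $\|f_j\|_\infty = \|f \star D_j\|_\infty$ where $D_j := \sum_{k\in\C_{2^j}} e^{ik\cdot x}$ is the (annular) Dirichlet-type kernel, so $\|f_j\|_\infty \le \|f\|_\infty \|D_j\|_1$; but without a smooth cutoff $\|D_j\|_1$ is not uniformly bounded. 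The clean fix is to use the smooth projector already available in the proof of Lemma~\ref{lem:decay}: there exists $\ffi\in\mathscr{D}(\R^\star)$ with $g = \sum_{k} c_k(g)\,\ffi(|k|/2^j)\,e^{ik\cdot x}$ for all $g\in\V_{2^j}$, hence $f_j = f \star \widecheck{\dd_{2^j}\ffi}$ with $\|\widecheck{\dd_{2^j}\ffi}\|_1 \lesssim \|\ffi''\|_\infty$ uniformly in $j$ (this is exactly the estimate \cite[Lemma 1.1]{I} quoted in the excerpt, applied with $\psi=\ffi$ rather than $\ffi_t$). Therefore $\sup_j \|f_j\|_\infty \lesssim \|f\|_\infty$, which is the $\ell^\infty$ endpoint $\|b_f\|_{\ell^\infty} \lesssim \|f\|_\infty$.

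With both endpoints in hand, I would conclude by real interpolation (Riesz--Thorin, or more simply Hölder). The map $T: f \mapsto b_f$ is sublinear (indeed the blocks depend linearly on $f$, so $T$ is linear into the space of sequences up to the pointwise modulus), bounded $\L^2 \to \ell^2$ and $\L^\infty \to \ell^\infty$; interpolating at $\theta$ with $\tfrac1k = \tfrac{1-\theta}{2} + \tfrac{\theta}{\infty} = \tfrac{1-\theta}{2}$, i.e. $1-\theta = 2/k$, yields $\L^k \to \ell^k$ with norm $\le \|T\|_{\L^2\to\ell^2}^{1-\theta}\|T\|_{\L^\infty\to\ell^\infty}^{\theta} \lesssim_k 1$. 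Since $T$ is defined on (zero-mean) trigonometric polynomials, which are dense, the bound extends to all zero-mean $f\in\L^k(\T^d)$. Alternatively, avoiding operator interpolation: writing $\|f_j\|_k \le \|f_j\|_2^{2/k}\|f_j\|_\infty^{1-2/k}$ and using $\|f_j\|_\infty \lesssim \|f\|_\infty$ gives $\sum_j \|f_j\|_k^k \lesssim \|f\|_\infty^{k-2}\sum_j \|f_j\|_2^2 = \|f\|_\infty^{k-2}\|f\|_2^2$, which is a bound by a product of norms rather than by $\|f\|_k^k$ alone; to get the sharp statement one really wants the interpolation of the operator, so I would present the Riesz--Thorin argument.

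\textbf{Main obstacle.} The only genuine subtlety is the $k=\infty$ endpoint: the naive Dirichlet-kernel bound fails because the $\L^1$ norm of a sharp annular Fourier projector grows logarithmically. Routing around this via the smooth projector $\ffi$ and the kernel estimate $\|\widecheck{\dd_{m}\psi}\|_1 \lesssim \|\psi''\|_\infty$ from \cite[Lemma 1.1]{I} — already invoked in the proof of Lemma~\ref{lem:decay} — is what makes the endpoint, and hence the interpolation, go through cleanly; everything else (Parseval at $L^2$, density, Riesz--Thorin) is routine.
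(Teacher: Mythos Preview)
Your overall strategy---Parseval at $k=2$, a uniform $\L^\infty$ bound at $k=\infty$, then Riesz--Thorin---is exactly what the paper does; the paper's proof is the same three lines, simply asserting the $k=\infty$ endpoint without justification.

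Your attempt to justify that endpoint, however, contains a slip. From $\ffi(|k|/2^j)=1$ for $k\in\C_{2^j}$ one gets $g = g \star \widecheck{\dd_{2^j}\ffi}$ for every $g\in\V_{2^j}$, but this does \emph{not} yield $f_j = f \star \widecheck{\dd_{2^j}\ffi}$: since $\ffi$ is smooth and equals $1$ on $[1,2)$, its support must strictly contain $[1,2)$, so $f \star \widecheck{\dd_{2^j}\ffi}$ picks up Fourier modes from neighbouring shells as well. What the identity actually gives is the tautology $f_j = f_j \star \widecheck{\dd_{2^j}\ffi}$, which does not bound $\|f_j\|_\infty$ by $\|f\|_\infty$. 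Your instinct about the sharp annular Dirichlet kernel is in fact correct---its $\L^1$ norm is not uniformly bounded in $j$---so the paper's own one-line assertion at $k=\infty$ is already delicate with the sharp blocks $f_j$ as defined here. The standard remedy is to work with smooth Littlewood--Paley blocks throughout; with that convention your argument, and the application in Proposition~\ref{thm:heat:main}, go through unchanged.
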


\begin{proof}
For $k=2$ there is in fact (up to some constant) equality: this is simply Parseval's Theorem together with the orthogonality of the dyadic blocks. For $k=+\infty$, the inequality holds thanks to the continuity of $f\mapsto f_j$ from $\L^\infty(\T^d)$ to itself, with a norm uniformly bounded w.r.t.$\,j\in\N$. All the intermediate cases follow then from Riesz-Thorin interpolation Theorem. $\qedhere$ 
\end{proof}

\begin{proof}[Proof of Propositon~\ref{thm:heat:main}]
We use the Littlewood-Paley decomposition on $f(s)$ at each time $s\in[0,T]$ to infer
  \begin{align*}
\int_0^t e^{(t-s)\Delta} f(s) \,\dd s = \sum_{j\in\N} \int_0^t e^{(t-s)\Delta} f_j(s)\,\dd s.
  \end{align*}
  Using the triangular inequality and the estimate of Lemma~\ref{lem:heatan} for the heat flow generated by a source supported on an annulus, we get
  \begin{align*}
\left\| \int_0^t e^{(t-s)\Delta} f(s)\,\dd s\right\|_{\L^\infty(Q_T)} \lesssim_d \sum_{j\in\N} (2^j)^{\frac{d}{k}-\frac{2}{k'}} \|f_j\|_{\L^k(Q_T)}. 
\end{align*}
The assumption $k>1+\frac{d}{2}$ is equivalent to $\frac{d}{k}-\frac{2}{k'}<0$, so that we get by Hölder's inequality the estimate
  \begin{align*}
\left\| \int_0^t e^{(t-s)\Delta} f(s)\,\dd s\right\|_{\L^\infty(Q_T)} \lesssim_{d,k} \left(\sum_{j\in\N} \|f_j\|_{\L^k(Q_T)}^k \right)^{1/k}, 
\end{align*}
and the conclusion eventually follows from Lemma \ref{prop:besov}. $\qedhere$
      \end{proof}

      We end the proof of Theorem \ref{lem:unibound} by using Proposition~\ref{thm:heat:main}.

      \begin{proof}[Proof of Theorem \ref{lem:unibound}]
Introducing $\widetilde{\Phi} := \Phi-e^{t\Delta}\Phi(0)$ and $f:=\partial_t \Phi-\Delta\Phi$, we have in particular 
\[\widetilde{\Phi} = \int_0^t e^{(t-s)\Delta}f(s)\,\dd s.\]  For a function $\ffi$ on the torus, we introduce the notation \[\underline{\ffi} := \ffi-\int_{\T^d} \ffi,\] with an obvious extension for functions depending also on the time variable. We therefore have
\begin{align*}
\underline{\widetilde{\Phi}} = \int_0^t e^{(t-s)\Delta} \underline{f(s)}\,\dd s. 
\end{align*}
Using Proposition~\ref{thm:heat:main}, we infer for $k>1+\frac{d}{2}$
\begin{align*}
\|\underline{\widetilde{\Phi}}\|_{\L^\infty(Q_T)} \lesssim_{k,d} \|\underline{f}\|_{\L^k(Q_T)} \lesssim \|f\|_{\L^k(Q_T)}.
\end{align*}
On the other hand, we have
\begin{align*}
  \underline{\Phi}
  = \underline{e^{t\Delta}\Phi(0)}+
  \underline{\widetilde{\Phi}},  
\end{align*}
and the maximum principle for the heat equation implies the bound
\begin{align*}
\|\underline{e^{t\Delta} \Phi(0)}\|_{\L^\infty(Q_T)} \leq 2 \|\Phi(0)\|_\infty,
\end{align*}
so that  the announced estimate follows. $\qedhere$
\end{proof}

\section{Classical SKT system: another method yielding global uniform \emph{a priori} estimates}

In this short part of the Appendix, we propose another method enabling to obtain a global uniform estimate for solutions to the SKT system, which uses standard Sobolev norms. This method could be adapted to the general case of systems like \eqref{eq:syst_gen}, but since it involves taking many derivatives, we have chosen to write it in the specific case when $p$ and $q$ are both linear, that is in the case of the classical SKT system (without reaction terms):
\begin{equation}\label{eq:syst_SKT}
  \left\{
    \begin{lgathered}
\partial_t u - \Delta\big[(d_1 + a_1 u +b_1 v)u\big] = 0,\\
\partial_t v - \Delta\big[(d_2 + a_2 u+ b_2 v)v\big] = 0,
    \end{lgathered}
  \right.
\end{equation}
where $a_i,b_i,d_i$ are nonnegative constants. In the sequel, for any function $f$ defined on $\T^d$, we use the following notation \[\underline{f}:= f-\int_{\T^d}f.\]

\begin{prop}
  For $k>d/2$, there exists $\delta>0$ depending on the coefficients $a_i,b_i,d_i$ and on $k,d$, such that any smooth nonnegative solution $(u,v)$ to \eqref{eq:syst_SKT} satisfying
  \begin{align*}
\|\underline{u}(0),\underline{v}(0)\|_{\H^{k}(\T^d)} \leq \delta/2,
  \end{align*}
  satisfies furthemore 
  \begin{align*}
\|\underline{u}(t),\underline{v}(t)\|_{\H^{k}(\T^d)} + \frac12 \int_0^t \|\underline{u}(s),\underline{v}(s)\|_{\H^{k+1}(\T^d)}^2 \,\dd s \leq \delta,
  \end{align*}
  for all positive time $t$. In particular, due to the Sobolev embedding $\H^{k}(\T^d)\hookrightarrow \L^\infty(\T^d)$, this entails a global uniform bound for the solution. 
\end{prop}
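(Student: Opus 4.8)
The plan is to mimic the bootstrap structure of Proposition~\ref{prop:uni}, but working directly with the Sobolev norm $\H^k$ of the zero-mean parts $\underline{u},\underline{v}$ rather than with the $N_k$/maximal-regularity machinery. First I would observe that on the torus the conserved averages $\int_{\T^d}u$ and $\int_{\T^d}v$ are fixed by the initial data, so smallness of $\underline{u}(0),\underline{v}(0)$ in $\H^k$ does \emph{not} immediately imply smallness of $u,v$ themselves; however, since in the classical SKT case $p,q$ vanish at $(0,0)$ and are linear, one keeps the means small only if they are small initially — so I would either assume the means are also controlled by $\delta$, or absorb them into the constants (the proposition as stated presumably intends the means to be part of the data, entering the coefficients $a_i,b_i,d_i$ of the effective linear system). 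Writing $U=(u,v)$ and applying $\pa^\alpha$ for $|\alpha|\le k$ to each equation, I would set up the energy functional $E(t):=\tfrac12\|\underline{u}(t),\underline{v}(t)\|_{\H^k}^2$ and compute $\tfrac{\dd}{\dd t}E$.

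The key computation is the differential inequality. Expanding $\Delta[(d_i+\text{linear})u]$ into $d_i\Delta u$ plus quadratic terms, testing the $\pa^\alpha$-differentiated equations against $\pa^\alpha\underline{u}$ and $\pa^\alpha\underline{v}$, and integrating by parts, the principal part yields the good dissipation $-d_1\|\nabla\underline{u}\|_{\H^k}^2-d_2\|\nabla\underline{v}\|_{\H^k}^2$ (up to the analogue of the matrix positivity argument $D(A)^{\mathrm{sym}}(0,0)>0$ used in the proof of Theorem~\ref{existence}, which here reduces to the diffusion matrix being positive for small data). The nonlinear terms, after Leibniz expansion, are trilinear expressions of the form $\int \pa^{\alpha_1}(\text{coeff})\,\pa^{\alpha_2}\nabla U\,\pa^{\alpha_3}\nabla U$ with $|\alpha_1|+|\alpha_2|+|\alpha_3|\lesssim k+2$; using the Gagliardo–Nirenberg / Moser-type product estimates on $\T^d$ (and the Sobolev embedding $\H^k\hookrightarrow\L^\infty$ for $k>d/2$) these are bounded by $C\,\|\underline{U}\|_{\H^k}\,\|\nabla\underline{U}\|_{\H^k}^2$ plus lower-order pieces, i.e. by $C\sqrt{E}\cdot(\text{dissipation})$ plus terms controlled via Poincaré–Wirtinger by $C\,E\cdot(\text{dissipation})$. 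Hence one obtains $E'(t)+\big(\min(d_1,d_2)-C\,\varphi(\sqrt{E(t)})\big)\|\nabla\underline{U}(t)\|_{\H^k}^2\le 0$ for some continuous $\varphi$ with $\varphi(0)=0$, and by Poincaré–Wirtinger $\|\nabla\underline{U}\|_{\H^k}^2\gtrsim\|\underline{U}\|_{\H^{k+1}}^2\gtrsim E$.

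From there I would close the argument exactly as in Lemma~\ref{lem:boot}: as long as $\sqrt{E(t)}\le\delta$ with $\delta$ chosen so that $C\varphi(\delta)\le\tfrac12\min(d_1,d_2)$, one gets $E'(t)+\tfrac14\min(d_1,d_2)\|\underline{U}(t)\|_{\H^{k+1}}^2\le 0$, hence $E$ is nonincreasing and $\int_0^t\|\underline{U}(s)\|_{\H^{k+1}}^2\,\dd s\lesssim E(0)$; with $E(0)\le\delta^2/8$ and the continuity in time of $t\mapsto\|\underline{U}(t)\|_{\H^k}$ (guaranteed by smoothness of the solution), the usual continuity/connectedness argument shows the set $\{t:\sqrt{E(t)}\le\delta\}$ is open, closed and nonempty in $\R_+$, hence all of $\R_+$, giving the stated bound; the $\L^\infty$ control follows from $\H^k\hookrightarrow\L^\infty$. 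I expect the main obstacle to be the bookkeeping in the nonlinear trilinear estimates — specifically making sure every term after the Leibniz expansion is genuinely of the form (small)$\times$(dissipation) with no leftover top-order derivative that the dissipation cannot absorb, which is where the precise condition $k>d/2$ and the Moser estimate $\|fg\|_{\H^k}\lesssim\|f\|_{\L^\infty}\|g\|_{\H^k}+\|f\|_{\H^k}\|g\|_{\L^\infty}$ must be invoked carefully — together with the minor but necessary check that the averages of $u,v$ stay harmlessly incorporated into the coefficients.
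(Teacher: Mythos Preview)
Your approach is sound but follows a genuinely different route from the paper's. The paper does \emph{not} set up a differential inequality for $E(t)$ via testing against $\partial^\alpha\underline{u}$; instead it applies the duality estimate of Lemma~\ref{lem:dua:bis} directly to $z=\partial^\alpha u$ (for $0<|\alpha|\le k+1$) with the full nonlinear mobility $\mu=d_1+a_1u+b_1v$ kept inside the diffusion operator, so that the Leibniz remainder sits entirely in the source $f$. This yields an \emph{integrated} inequality of the form $A(t)-A(0)\lesssim A(t)^2$ with $A(t)=\|\underline{U}(t)\|_{\H^k}^2+\int_0^t\|\underline{U}\|_{\H^{k+1}}^2$, and then Lemma~\ref{lem:boot} closes. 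For the Leibniz terms $\partial^\beta(a_1u+b_1v)\,\partial^{\alpha-\beta}u$ with $0<|\beta|,|\gamma|<|\alpha|$ the paper uses Gagliardo--Nirenberg with the specific conjugate pair $p=|\alpha|/|\beta|$, $p'=|\alpha|/|\gamma|$, rather than the Moser product estimate you invoke. The main payoff of the paper's route is that keeping $\mu$ intact makes the sign issue disappear: one only needs $\mu\ge d_1>0$, so no symmetrized-matrix positivity argument is required at the linear level. Your route is more elementary (no duality lemma) but forces you to confront exactly the coercivity question you flag. Your observation about the conserved means is sharp: in the paper's proof the case $\gamma=0$ produces $\int|\partial^\alpha U|^2|U|^2$, which the paper bounds by $\sup\|\underline{U}\|_\infty^2\int\|\underline{U}\|_{\H^{k+1}}^2$ without comment on the passage from $|U|$ to $|\underline{U}|$ --- so the issue you identify is present there as well, and your proposed fix (treat the means as part of the data on which $\delta$ depends) is the natural reading.
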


\begin{proof}
Fix $\alpha\in\N^d$ and denote $\partial^\alpha$ the corresponding (spatial) differential operator. Applying it to the first equation of \eqref{eq:syst_SKT}, we get
\begin{align*}
\partial_t \partial^\alpha u - \Delta \big[(d_1+ a_1 u+b_1v) \partial^\alpha u\big] = \Delta\left[\sum_{0 < \beta \leq \alpha} c_{\alpha,\beta} \partial^\beta (a_1 u+b_1 v) \partial^{\alpha-\beta} u\right], 
\end{align*}
where we used Leibniz formula in the r.h.s. (with the partial order $\beta_i\leq \alpha_i$, for all $i$ on $\N^d$). Using Lemma~\ref{lem:dua:bis} with $z= \partial^\alpha u$ and $\mu = d_1+au+bv$, for all $\alpha\in\N^d$ satisying $0 < |\alpha|\leq k+1$, and taking the sum over all those multi-indices, we get 
\begin{multline*}
  \|\underline{u}(t,\cdot)\|_{\H^{k}(\T^d)}^2 +\int_0^t \|\underline{u}(s,\cdot)\|_{\H^{k+1}(\T^d)}^2\,\dd s \\\leq \|\underline{u}(0,\cdot)\|_{\H^{k}(\T^d)}^2 
  +\frac{1}{d_1} \sum_{0 < |\alpha|\leq k+1} \int_0^t \int_{\T^d} \left[\sum_{0 < \beta \leq \alpha} c_{\alpha,\beta} \partial^\beta (a_1 u+b_1 v) \partial^{\alpha-\beta} u\right]^2 .
\end{multline*}
A similar estimate holds for $v$, and summing both, we get for $\underline{U}:=(\underline{u},\underline{v})$ (and for a symbol $\lesssim$ which depends only on the parameters $a_i,b_i,d_i$):
\begin{multline}\label{ineq:gns}
  \|\underline{U}(t,\cdot)\|_{\H^{k}(\T^d)}^2 -  \|\underline{U}(0,\cdot)\|_{\H^{k}(\T^d)}^2+\int_0^t \|\underline{U}(s,\cdot)\|_{\H^{k+1}(\T^d)}^2\,\dd s \\\lesssim  \sum_{0< |\alpha|\leq k+1}\sum_{\substack{\beta+\gamma = \alpha\\0<|\beta|}} \int_0^t \int_{\T^d} |\partial^\beta U|^2 \cdot |\partial^\gamma U|^2\\ 
\lesssim  \sum_{0< |\alpha|\leq k+1} \int_0^t\int_{\T^d} |\partial^\alpha U|^2 \cdot |U|^2\\ + \sum_{0< |\alpha|\leq k+1}\sum_{\substack{\beta+\gamma = \alpha\\0<|\beta|,|\gamma|<|\alpha|}} \int_0^t \|\partial^\beta \underline{U}(s)\|_{2p}^2 \|\partial^\gamma \underline{U}(s)\|_{2p'}^2 \,\dd s,
\end{multline}
for any pair of conjugate exponents $1\leq p,p'\leq \infty$. In the final r.h.s., all the terms of the first sum are bounded by
\begin{align*}
\sup_{s\in[0,t]}\|\underline{U}(s)\|_{\infty}^2\, \int_0^t \|\underline{U}(s)\|_{\H^{k+1}(\T^d)}^2\,\dd s.
\end{align*}
Our goal is to obtain a similar bound for the double sum appearing on the last line. In order to do so, we will use the Gagliardo-Nirenberg-Sobolev interpolation inequality. For $0<|\beta|< |\alpha|$, this estimate implies 
\begin{align*}
\|\partial^\beta f\|_{2\frac{|\alpha|}{|\beta|}} \lesssim_{d,|\alpha|,|\beta|}  \|\D^{|\alpha|} f\|_2^{\frac{|\beta|}{|\alpha|}} \|f\|_\infty^{1-\frac{|\beta|}{|\alpha|}}, 
\end{align*}
since 
\begin{align*}
\frac{|\beta|}{2|\alpha|} = \frac{|\beta|}{d} + \frac{|\beta|}{|\alpha|}\left(\frac12 -\frac{|\alpha|}{d}\right).
\end{align*}
Now, getting back to the last double sum of \eqref{ineq:gns}, if $\beta\neq 0$ is linked to $\gamma\neq 0$ through $\beta+\gamma=\alpha$, the conjugate of $p=\frac{|\alpha|}{|\beta|}$ is $p'=\frac{|\alpha|}{|\gamma|}$. We have therefore the following estimate for any $|\alpha|\leq k+1$:
\begin{align*}
  \int_0^t \|\partial^\beta \underline{U}(s)\|_{2p}^2 \|\partial^\gamma \underline{U}(s)\|_{2p'}^2 \,\dd s &\lesssim_{d,k} \int_0^t \|\D^{|\alpha|}\underline{U}(s)\|_2^2\|\underline{U}(s)\|_\infty^2\,\dd s \\
  &\leq \sup_{s\in[0,t]}\|\underline{U}(s)\|_\infty^2 \int_0^t \|\underline{U}(s)\|_{\H^{k+1}(\T^d)}^2\,\dd s.
\end{align*}
At the end of the day, we infer from estimate \eqref{ineq:gns} the following one:
\begin{multline*}
  \|\underline{U}(t,\cdot)\|_{\H^{k}(\T^d)}^2 -  \|\underline{U}(0,\cdot)\|_{\H^{k}(\T^d)}^2 +\int_0^t \|\underline{U}(s,\cdot)\|_{\H^{k+1}(\T^d)}^2\,\dd s \\
 \lesssim   \sup_{s\in[0,t]}\|\underline{U}(s)\|_{\infty}^2\, \int_0^t \|\underline{U}(s)\|_{\H^{k+1}(\T^d)}^2\,\dd s,
\end{multline*}
where this time $\lesssim$ depends on the coefficients $a_i,b_i,d_i$ and also on $k$ and $d$.
\medskip

Defining 
$$ A(t) :=   \|\underline{U}(t,\cdot)\|_{\H^{k}(\T^d)}^2  + \int_0^t \|\underline{U}(s,\cdot)\|_{\H^{k+1}(\T^d)}^2\,\dd s , $$
we see that we can write 
$$ A(t) - A(0) \lesssim A(t)^2, $$
and the proof can be concluded thanks to Lemma \ref{lem:boot} of Appendix B.
$\qedhere$

\end{proof}

\bibliographystyle{plain}
\bibliography{artal}

\begin{thebibliography}{10}

\bibitem{amann90}
Herbert Amann.
\newblock Dynamic theory of quasilinear parabolic equations. {II}.
  {R}eaction-diffusion systems.
\newblock {\em Differential Integral Equations}, 3(1):13--75, 1990.

\bibitem{amann90b}
Herbert Amann.
\newblock Erratum: ``{D}ynamic theory of quasilinear parabolic systems. {III}.\
  {G}lobal existence'' [{M}ath.\ {Z}.\ {\bf 202} (1989), no.\ 2, 219--250;
  {MR}1013086 (90i:35125)].
\newblock {\em Math. Z.}, 205(2):331, 1990.

\bibitem{bachra}
Hajer Bahouri, Jean-Yves Chemin, and Raphaël Danchin.
\newblock {\em Fourier Analysis and Nonlinear Partial Differential Equations}.
\newblock Springer Berlin Heidelberg, 2011.

\bibitem{bmmh}
Vincent Bansaye, Ayman Moussa, and Felipe Muñoz-Hernández.
\newblock Stability of a cross-diffusion system and approximation by repulsive
  random walks: a duality approach, 2021.

\bibitem{BLMP}
M.~Bendahmane, T.~Lepoutre, A.~Marrocco, and B.~Perthame.
\newblock Conservative cross diffusions and pattern formation through
  relaxation.
\newblock {\em Journal de Math{\'e}matiques Pures et Appliqu{\'e}es}, 92(6):651
  -- 667, 2009.

\bibitem{CJ}
Li~Chen and Ansgar J{\"u}ngel.
\newblock Analysis of a parabolic cross-diffusion population model without
  self-diffusion.
\newblock {\em J. Differential Equations}, 224(1):39--59, 2006.

\bibitem{chen_daus_jungel}
Xiuqing Chen, Esther~S. Daus, and Ansgar J\"{u}ngel.
\newblock Global existence analysis of cross-diffusion population systems for
  multiple species.
\newblock {\em Archive for Rational Mechanics and Analysis}, 227(2):715–747,
  September 2017.

\bibitem{chenjunwan}
Xiuqing Chen, Ansgar J\"{u}ngel, and Lei Wang.
\newblock The shigesada–kawasaki–teramoto cross-diffusion system beyond
  detailed balance.
\newblock {\em Journal of Differential Equations}, 360:260–286, July 2023.

\bibitem{DLM}
L.~Desvillettes, T.~Lepoutre, and A.~Moussa.
\newblock Entropy, duality, and cross diffusion.
\newblock {\em SIAM Journal on Mathematical Analysis}, 46(1):820--853, 2014.

\bibitem{DLMT}
L.~Desvillettes, T.~Lepoutre, A.~Moussa, and A.~Trescases.
\newblock On the entropic structure of reaction-cross diffusion systems.
\newblock {\em Comm. Partial Differential Equations}, 40(9):1705--1747, 2015.

\bibitem{dm}
Helge {Dietert} and Ayman {Moussa}.
\newblock {Persisting entropy structure for nonlocal cross-diffusion systems}.
\newblock {\em arXiv e-prints}, page arXiv:2101.02893, January 2021.

\bibitem{druet_hopf_jungel}
Pierre-Étienne Druet, Katharina Hopf, and Ansgar J\"{u}ngel.
\newblock Hyperbolic–parabolic normal form and local classical solutions for
  cross-diffusion systems with incomplete diffusion.
\newblock {\em Communications in Partial Differential Equations},
  48(6):863–894, June 2023.

\bibitem{guementre}
Jessica Guerand, Angeliki Menegaki, and Ariane Trescases.
\newblock Global smooth solutions for triangular reaction-cross diffusion
  systems.
\newblock {\em Bulletin des Sciences Mathématiques}, 189:103342, December
  2023.

\bibitem{HoanNguPha}
Luan~T. Hoang, Truyen~V. Nguyen, and Tuoc~V. Phan.
\newblock Gradient estimates and global existence of smooth solutions to a
  cross-diffusion system.
\newblock {\em SIAM Journal on Mathematical Analysis}, 47(3):2122--2177, 2015.

\bibitem{I}
Dragos Iftimie.
\newblock The 3d navier-stokes equations seen as a perturbation of the 2d
  navier-stokes equations.
\newblock {\em Bulletin de la SMF}, 127(4):473--517, 1999.

\bibitem{book_ansgar}
Ansgar J\"{u}ngel.
\newblock {\em Entropy Methods for Diffusive Partial Differential Equations}.
\newblock Springer International Publishing, 2016.

\bibitem{kim1984smooth}
Jong~Uhn Kim.
\newblock Smooth solutions to a quasi-linear system of diffusion equations for
  a certain population model.
\newblock {\em Nonlinear analysis}, 8(10):1121--1144, 1984.

\bibitem{perthame_laamri}
El-Haj Laamri and Beno{\^{\i}}t Perthame.
\newblock Reaction-diffusion systems with initial data of low regularity.
\newblock {\em Journal of Differential Equations}, 269(11):9310--9335, November
  2020.

\bibitem{LM}
Thomas Lepoutre and Ayman Moussa.
\newblock Entropic structure and duality for multiple species cross-diffusion
  systems.
\newblock {\em Nonlinear Analysis}, 159:298--315, August 2017.

\bibitem{lnw}
Yuan Lou, Wei-Ming Ni, and Yaping Wu.
\newblock On the global existence of a cross-diffusion system.
\newblock {\em Discrete Contin. Dynam. Systems}, 4(2):193--203, 1998.

\bibitem{mami}
Hiroshi Matano and Masayasu Mimura.
\newblock Pattern formation in competition-diffusion systems in nonconvex
  domains.
\newblock {\em Publications of the Research Institute for Mathematical
  Sciences}, 19(3):1049--1079, 1983.

\bibitem{mim}
Masayasu Mimura.
\newblock Stationary pattern of some density-dependent diffusion system with
  competitive dynamics.
\newblock {\em Hiroshima Mathematical Journal}, 11(3), January 1981.

\bibitem{m}
Ayman Moussa.
\newblock From nonlocal to classical shigesada--kawasaki--teramoto systems:
  Triangular case with bounded coefficients.
\newblock {\em {SIAM} Journal on Mathematical Analysis}, 52(1):42--64, January
  2020.

\bibitem{pisc}
Michel Pierre and Didier Schmitt.
\newblock Blowup in reaction-diffusion systems with dissipation of mass.
\newblock {\em SIAM J. Math. Anal.}, 28(2):259--269, 1997.

\bibitem{robinson}
James~C. Robinson, José~L. Rodrigo, and Witold Sadowski.
\newblock {\em The Three-Dimensional Navier–Stokes Equations: Classical
  Theory}.
\newblock Cambridge Studies in Advanced Mathematics. Cambridge University
  Press, 2016.

\bibitem{cly1}
Y.~S.~Choi, Roger Lui, and Yoshio Yamada.
\newblock Existence of global solutions for the shigesada-kawasaki-teramoto
  model with weak cross-diffusion.
\newblock {\em Discrete and Continuous Dynamical Systems - A},
  9(5):1193–1200, 2003.

\bibitem{skt}
Nanako Shigesada, Kohkichi Kawasaki, and Ei~Teramoto.
\newblock Spatial segregation of interacting species.
\newblock {\em J. Theoret. Biol.}, 79(1):83--99, 1979.

\bibitem{shi}
Seong-A Shim.
\newblock Uniform boundedness and convergence of solutions to the systems with
  a single nonzero cross-diffusion.
\newblock {\em Journal of Mathematical Analysis and Applications},
  279(1):1–21, March 2003.

\bibitem{xu}
Xiangsheng {Xu}.
\newblock {Existence theorem for a partially parabolic cross-diffusion system}.
\newblock {\em arXiv e-prints}, page arXiv:2304.04329, April 2023.

\end{thebibliography}

\end{document}